\newcommand\restr[2]{{
  \left.\kern-\nulldelimiterspace 
  #1 
  \vphantom{\big|} 
  \right|_{#2}
  }}
\numberwithin{equation}{section}
\theoremstyle{remark}
\newtheorem{rmk}{Remark}
\theoremstyle{plain}
\newtheorem{thm}{Theorem}
\newtheorem{lem}[thm]{Lemma}
\newtheorem{prop}[thm]{Proposition}
\newtheorem{defi}[thm]{Definition}
\newtheorem{cor}[thm]{Corollary}
\theoremstyle{definition}
\newtheorem{exa}[thm]{Example}
\numberwithin{thm}{section}
\newcommand{\N}{\mathbb{N}}
\newcommand{\Z}{\mathbb{Z}}
\newcommand{\F}{\mathbb{F}}
\renewcommand{\email}[2][]{%
  \ifx\emails\@empty\relax\else{\g@addto@macro\emails{,\space}}\fi%
  \@ifnotempty{#1}{\g@addto@macro\emails{\textrm{(#1)}\space}}%
  \g@addto@macro\emails{#2}%
}
\title[]{Filtered matchings and simplicial complexes}
\author{Daniele Celoria}
\address[1,2]{University of Oxford, Mathematical Institute, Oxford, OX2 6GG, UK}
\email[1]{celoria@maths.ox.ac.uk}
\author{Naya Yerolemou}
\address[2]{The Alan Turing Institute, London, NW1 2DB, UK}
\email[2]{yerolemou@maths.ox.ac.uk}
\begin{document}

\begin{abstract}
To any finite simplicial complex $X$, we associate a natural filtration starting from Chari and Joswig's discrete Morse complex and abutting to the matching complex of $X$. This construction leads to the definition of several homology theories, which we compute in a number of examples. We also completely determine the graded object associated to this filtration in terms of the homology of simpler complexes. This last result provides some connections to the number of vertex-disjoint cycles of a graph.   
\end{abstract}
\maketitle

\section{Introduction}

In recent years there has been a growing interest in the combinatorial and topological properties of certain complexes arising from graph theory that can be associated to a simplicial complex. Namely, to a simplicial complex $X$ we can associate its matching complex; this is the simplicial complex $\text{M}(X)$ built from matchings on the face poset of $X$. 

For a graph $G$, the matching complex can be directly defined as the simplicial complex of matchings of $G$; this construction has been extensively studied for some special families of graphs, namely the complete and bipartite graph (see e.g.~\cite{shareshian2007torsion}, \cite{jonsson2007simplicial} and their references), due to the numerous connections between these complexes and several other interesting topics (see \cite[Ch.~1]{jonsson2007simplicial} for an exhaustive list).

A noteworthy subcomplex of the matching complex was introduced by Chari and Joswig in \cite{chari2005complexes}; this is the simplicial complex built from suitable equivalence classes of Forman's discrete Morse functions \cite{forman1998morse}.

The relation between these two complexes is not well understood. The aim of this paper is to introduce a natural filtration on the matching complex of a given finite simplicial complex, which will allow us to relate the matching complex to the discrete Morse complex.  Furthermore, we will define three homology theories related to the filtration, one of which is only valid when the simplicial complex is $1$-dimensional. 

While a full computation of these becomes rapidly unwieldy even for relatively simple complexes, it is possible to greatly reduce the complexity for one of these homology theories; more precisely, we prove in Theorem \ref{thm:hH} (Section~\ref{sec:filteredhomology}) that the homology of the associated graded object of the filtration can be decomposed as the direct sum of contributions from the discrete Morse complex and collections of oriented cycles in the face poset. 

We also provide several examples and computations of these homology groups.

\section{The matching and discrete Morse complexes}

In this section we recall the terminology and ideas we will use throughout this paper. We point out that in what follows we will use the terms ``cycle'' and ``loop'' interchangeably, specifying each time whether we consider them to be oriented or not if it is not clear from the context.

Given a finite simplicial complex $X$, one can associate to it its \emph{face poset} $\mathcal{F}(X)$. This is the oriented graph representing the poset structure of simplices in $X$. More precisely, vertices of $\mathcal{F}(X)$ are the simplices of $X$, and there is an oriented edge from $\sigma$ to $\tau$ if and only if $\tau$ is a codimension one face of $\sigma$.

A \emph{matching} on a graph $G$ is a subset $m \subseteq E(G)$ consisting of edges not sharing any vertices. Such a matching is called \emph{perfect} if all vertices in the graph are matched by some edge, that is, if the subgraph consisting of the matched edges is spanning.

The \emph{matching complex} $\text{M}(X)$ is the simplicial complex spanned by matchings in $\mathcal{F}(X)$. That is, the $k$ simplices are given by matchings with $k+1$ edges, and face maps are given by deleting one edge.

Given a matching on $\mathcal{F}(X)$, we can invert the orientation of all the edges that comprise the matching; if this operation does not create any oriented loops in the face poset, we say that the matching is \emph{acyclic}. A \emph{discrete Morse matching} on $X$ is an acyclic matching on $\mathcal{F}(X)$, and the \emph{discrete Morse complex} $\mathcal{M}(X)$ of $X$ is the simplicial complex spanned by discrete Morse matchings on $\mathcal{F}(X)$. This complex was first defined in \cite{chari2005complexes}; the terminology is derived from Forman's discrete Morse functions \cite{forman1998morse}, which are a simplicial analogue of the ``classical'' Morse functions on smooth manifolds. A discrete Morse matching can be thought of as being the gradient vector field of a discrete Morse function. With this point of view, (not necessarily acyclic) matchings are to be regarded as a discrete analogue of gradient vector fields of smooth functions.

There are several reasons that make the discrete Morse complex an interesting object to study, besides being a discrete analogue of its smooth counterpart; for instance it is a complete invariant of simplicial complexes \cite[Thm.~A]{capitelli2017simplicial}, meaning that two simplicial complexes are isomorphic if and only if their discrete Morse complexes are isomorphic. Furthermore, with the exception of the cycles $C_n$ and the boundary of the $n$-simplex, the automorphism group of $\mathcal{M}(X)$ is isomorphic to the automorphism group of $X$ by \cite[Thm.~1]{lin2019automorphism}.

We also point out this interesting result from \cite{chari2005complexes} (see also \cite{kozlov1999complexes}), which will be extensively used in the next sections. 
\begin{prop}{\cite[Prop.~3.1]{chari2005complexes}}\label{prop:chari}
If $G$ is a graph, there is a bijection between the set of discrete Morse matchings on $\mathcal{F}(G)$ and rooted spanning forests in $G$. 
\end{prop}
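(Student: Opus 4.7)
The plan is to exhibit an explicit bijection and verify both directions. Given a discrete Morse matching $M$ on $\mathcal{F}(G)$, associate to it the subgraph $F_M \subseteq G$ whose edge set consists of precisely those edges of $G$ appearing in some matched pair of $M$, and declare as roots the vertices of $G$ unmatched by $M$. Conversely, from a rooted spanning forest $F$ of $G$, define a matching $M_F$ on $\mathcal{F}(G)$ by pairing each non-root vertex $v$ with the unique edge of $F$ incident to $v$ along the path from $v$ to its root (the ``parent edge'' of $v$). The remainder amounts to showing (i) $F_M$ is a rooted spanning forest with exactly one root per connected component, (ii) $M_F$ is acyclic, and (iii) the two constructions are mutually inverse.

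For (i), the heart of the argument is that $F_M$ contains no graph-theoretic cycle. Suppose otherwise: a cycle $v_0, e_1, v_1, \ldots, e_n, v_n = v_0$ in $F_M$ has every $e_i$ matched with exactly one of its endpoints $v_{i-1}, v_i$, and each vertex $v_j$ absorbs at most one such pairing; counting $n$ matched edges against $n$ candidate vertices forces every vertex of the cycle to be matched exactly once. A local analysis at each $v_i$ then forces the induced orientation of the cycle to be cyclically consistent, and alternately tracing matched (reversed) and unmatched arrows in the face poset yields a directed loop $v_0 \to e_1 \to v_1 \to \cdots \to v_n = v_0$, contradicting acyclicity of $M$. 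An analogous argument, applied to the unique path in $F_M$ between two putative roots in the same tree, shows each component has at most one root; at least one root in each component is forced by the finiteness of any chain of parent pointers.

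For (ii), $M_F$ is a matching by construction: each non-root vertex is paired with its unique parent edge and each edge of $F$ is paired with its unique child endpoint. Acyclicity follows because any directed loop in the reversed face poset translates into a closed sequence of vertices in which each is the parent of the previous in $F$, which is impossible in a forest. Step (iii) is a routine verification: the parent edge of a matched vertex in $F_M$ coincides with its match in $M$, and the matched edges together with the unmatched vertices of $M_F$ recover $F$ with its root structure. The main obstacle is the acyclicity argument in (i), where the pigeonhole on matched vertices around a hypothetical cycle must be combined with the local constraint that each vertex is matched at most once to force a consistent cyclic orientation, producing the forbidden directed loop in $\mathcal{F}(G)$.
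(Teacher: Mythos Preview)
The paper does not prove this proposition; it is stated with a citation to \cite[Prop.~3.1]{chari2005complexes} (and a parenthetical reference to \cite{kozlov1999complexes}) and used as a black box thereafter. So there is no proof in the paper to compare against.

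Your argument is correct and is essentially the standard one. A couple of minor points worth tightening. In step (i), the claim that each component of $F_M$ has at most one root is cleanest argued directly rather than ``analogously'' to the cycle case: if $r_1 = v_0, e_1, v_1, \ldots, e_k, v_k = r_2$ is the path in $F_M$ between two roots, then $e_1$ must be matched with $v_1$ (since $v_0$ is unmatched), forcing $e_2$ to be matched with $v_2$, and inductively $e_k$ with $v_k = r_2$, contradicting that $r_2$ is a root. For ``at least one root per component'', the parent-pointer chain argument is fine, but it is equally immediate to observe that a tree component with $k$ vertices has $k-1$ edges, each matched to a distinct vertex, so some vertex is unmatched. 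Finally, you should remark that $F_M$ is spanning by construction (all vertices of $G$ are included, possibly as isolated roots), since you use this implicitly.
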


There is a bit of a notational conflict in the literature, in the case where $X$ is a graph. It is immediate to realise that if $G$ is a graph, then $\mathcal{F}(G)$ is isomorphic to the first barycentric subdivision $B(G)$ of $G$, with the orientation going from barycenters of the edges towards the vertices of $G$, as shown in Figure~\ref{fig:hassediagramgraph}. 
\begin{figure}[ht]
\centering
\includegraphics[width=6cm]{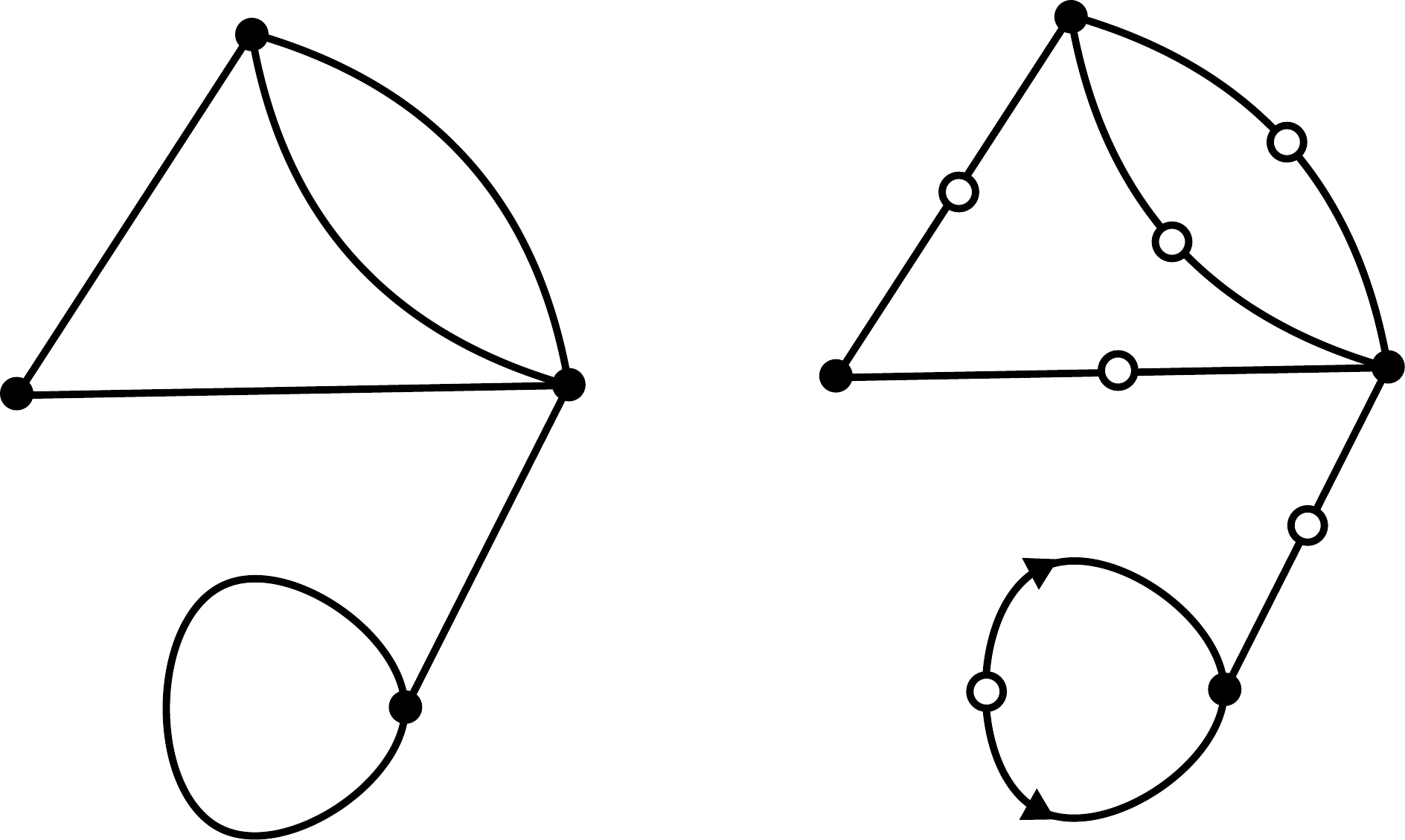}
\caption{On the left a graph, and on the right its face poset. White vertices represent the $1$-cells of $G$.}
\label{fig:hassediagramgraph}
\end{figure}
So in the case of graphs, we will instead denote by $\overline{\text{M}}(G)$ the simplicial complex spanned by disjoint edges of $G$. In other words, if $G$ is a $1$-dimensional simplicial complex (a graph), then $\text{M}(G) = \overline{\text{M}}(B(G))$. The complexes  $\overline{\text{M}}(G)$ have been extensively studied, especially for specific families of graphs --such as $K_n$, the complete graphs on $n$ vertices (see \cite{jonsson2007simplicial} for an extensive monograph on the subject). 

Analogously, given an oriented graph $G$, we define $\overline{\mathcal{M}}(G)$ as the simplicial complex with vertices given by $E(G)$, and simplices given by  collections of disjoint edges, such that changing the orientation of each of these edges does not create oriented loops. As before, for graphs we have $\mathcal{M} (G) = \overline{\mathcal{M}}(B(G))$. Observe that $\overline{\mathcal{M}}(G)$ is not well-defined for general graphs; for example, whenever $G$ contains an oriented loop, $\overline{\mathcal{M}}(G)$ cannot be a simplicial complex. We will, however, reserve this notation only for certain subgraphs of face posets, and in this case no issue arises.

If $F$ is a forest, it follows immediately that $\overline{\text{M}}(F) = \overline{\mathcal{M}}(F)$, since all matchings are acyclic. Moreover, a result by Marietti and Testa \cite[Thm.~4.13]{marietti2008uniform} guarantees that, for a forest, these complexes are either contractible or homotopic to a wedge of spheres (of possibly different dimensions).

For a simplex $\sigma \subseteq \text{M}(X)$, we will denote by $m_\sigma$ the matching on $\mathcal{F}(X)$ associated to $\sigma$.
Given a matching $m$ on $\mathcal{F}(X)$ inducing an oriented loop $\gamma$ in $\mathcal{F}(X)$, we say that $m$ \emph{supports} $\gamma$; an example is shown in Figure \ref{fig:supportingloop}. Note that if $m$ supports the loop $\gamma$, then every other edge of the loop $\gamma$
belongs to the matching $m$. However, the converse is not true: an unoriented loop with half of its edges alternatively belonging to a matching need not to be oriented (see the right part of Figure \ref{fig:supportingloop}).
\begin{lem}\label{lem:orientedloops}
Let $m$ be a matching on $\mathcal{F}(X)$. Then any oriented loop $\gamma$ induced on $\mathcal{F}(X)$ by $m$ has the following properties: 
\begin{enumerate}
\item The dimensions of the simplices corresponding to the vertices of $\gamma$ are two consecutive integers.
\item The loop $\gamma$  has even length.
\end{enumerate}
\end{lem}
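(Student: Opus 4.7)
The plan is to analyse the loop $\gamma$ edge-by-edge in the reoriented face poset. Write $\gamma$ as a cyclic sequence of vertices $v_0, v_1, \ldots, v_{k-1}$ with edges $e_i = (v_i, v_{i+1})$, indices taken modulo $k$. Each edge $e_i$ falls into one of two types: either $e_i \notin m$, in which case $e_i$ retains its original orientation in $\mathcal{F}(X)$ and $\dim v_{i+1} = \dim v_i - 1$; or $e_i \in m$, in which case $e_i$ has been reversed and $\dim v_{i+1} = \dim v_i + 1$. I will call these \emph{down-edges} and \emph{up-edges} respectively. Note that the fact that the dimension changes by exactly $\pm 1$ across each $e_i$ is built into the definition of $\mathcal{F}(X)$.

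For part $(2)$, summing the dimension changes $\dim v_{i+1} - \dim v_i$ cyclically around $\gamma$ must give zero. If $U$ denotes the number of up-edges and $D$ the number of down-edges, this yields $U = D$, so the length $k = U + D = 2U$ is even.

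For part $(1)$, I would show that up-edges and down-edges alternate along $\gamma$. The crucial observation is that no two consecutive edges $e_{i-1}, e_i$ can both be up-edges: both would then be edges of $m$ incident to their common endpoint $v_i$, contradicting the fact that $m$ is a matching. So the cyclic sequence of $U$ up-edges and $U$ down-edges contains no two adjacent up-edges; a gap-counting argument then finishes the job, since between any two consecutive up-edges there must be at least one down-edge, and the total number of down-edges equals the number of such gaps, forcing exactly one down-edge in each gap. Hence the edge types strictly alternate along $\gamma$, and consequently the dimension increments alternate between $+1$ and $-1$. Therefore $\dim v_i$ takes exactly two values $d$ and $d+1$ for some integer $d$, as required.

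There is no serious obstacle here; the argument is essentially combinatorial bookkeeping once one recognises that the matching condition forbids adjacent up-edges. The only subtlety to be careful about is the cyclic nature of $\gamma$: the ``no adjacent up-edges'' condition must be applied cyclically (including the pair $e_{k-1}, e_0$), since it is this global cyclic constraint, rather than any local one, that forces the full alternation pattern rather than merely a local one.
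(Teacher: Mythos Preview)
Your proof is correct. Both your argument and the paper's hinge on the same key observation: two consecutive edges of $\gamma$ cannot both lie in $m$, since they would share an endpoint. The organisation differs, though. The paper proves (1) first by contradiction---if vertices of three distinct dimensions appeared, somewhere in $\gamma$ there would be a path of length two climbing from dimension $k$ to $k+2$, forcing two adjacent matched edges---and then deduces (2) from the resulting bipartiteness of the vertex set into dimensions $l$ and $l+1$. You reverse the order: you obtain (2) directly from the telescoping identity $\sum_i(\dim v_{i+1}-\dim v_i)=0$, which gives $U=D$, and then use the equality $U=D$ together with the no-adjacent-up-edges constraint in a pigeonhole (gap-counting) argument to force strict alternation, from which (1) follows. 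Your route is slightly more explicit in that it actually establishes the alternation pattern of up- and down-edges, whereas the paper's contradiction argument is marginally quicker for (1) alone. Both are short and the difference is largely cosmetic.
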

\begin{proof}
Suppose, for a contradiction, that a loop $\gamma$ contains vertices corresponding to simplices in at least $3$ dimensions. Then, $\gamma$ necessarily contains a path of length $2$, starting at a vertex of dimension $k$ and ending at a vertex of dimension $k+2$. This implies that both edges in this path are contained in $m$, as they are both oriented upwards in dimension. This contradicts the fact that $\gamma$ is supported by a matching, as the middle vertex of dimension $k+1$ would necessarily be incident to two edges in $m$.

Hence, $\gamma$ is a loop in a bipartite graph (the subgraph of $\mathcal{F}(X)$ spanned by all vertices incident to edges in $m$) whose vertices are partitioned into two sets, those of dimension $l$ and those of dimension $l+1$, for some integer $l$. So, $\gamma$ has even length.
\end{proof}

\begin{figure}[ht]
\centering
\includegraphics[width=10cm]{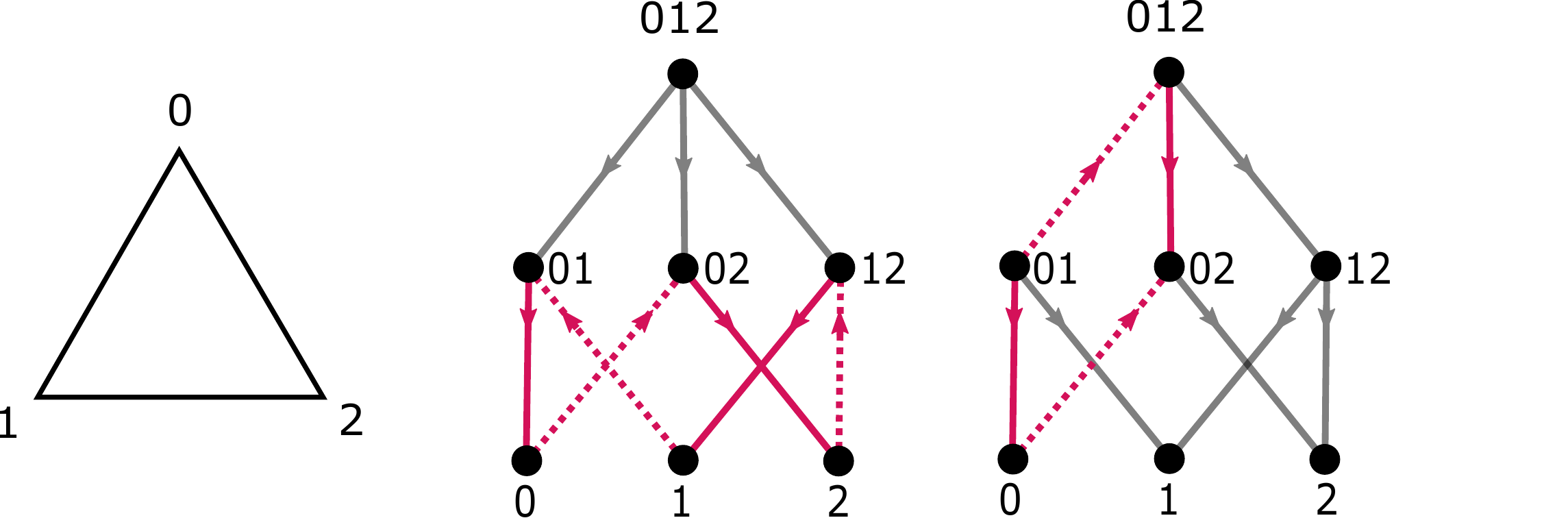}
\caption{On the left the $2$-simplex; the two graphs are a representation for its face poset. The matching given by the dotted edges in the central part supports the oriented red loop. Note that inverting the role of dotted and solid red edges produces another matching supporting the same loop, but with the opposite orientation. On the right, an example of a loop that is not supported by the matching, despite having half of its edges contained in it.}
\label{fig:supportingloop}
\end{figure}

For a simplicial complex $X$, we denote by $\mathcal{C}(X)$ the collection of unions of oriented cycles in $\mathcal{F}(X)$, such that each element $C \in \mathcal{C}(X)$ is supported by a matching in $\mathcal{F}(X)$. The empty set is included in $\mathcal{C}(X)$ by definition. Note that these cycles are not necessarily disjoint, as shown in Figure \ref{fig:nondisjointloops}. The number of oriented cycles in $C$ will be denoted by $|C|$, and the number of connected components in $C$ will be denoted by $\mu(C)$. We will also reserve the notation $\mu_1(C)$ for the number of cycles in $C$ not sharing  edges with any other one.

We say that $C \in \mathcal{C}(X)$ is \emph{maximal} if the subgraph of $\mathcal{F}(X)$ given by the loops in $C$ is spanning. Let us denote by $\mathcal{C}_\circ (X)$ the subset of  $\mathcal{C}(X)$ consisting of  non-maximal collections of cycles, and by $\mathcal{C}_M(X)$ the maximal ones. For reasons that will become apparent in the next section (\emph{cf}. Theorem~\ref{thm:hH}), we do not include the empty set of cycles in $\mathcal{C}_\circ(X)$, so $\mathcal{C}(X) = \{\emptyset\} \sqcup  \mathcal{C}_\circ(X) \sqcup \mathcal{C}_M(X)$.
\begin{figure}[ht]
\centering
\includegraphics[width=10cm]{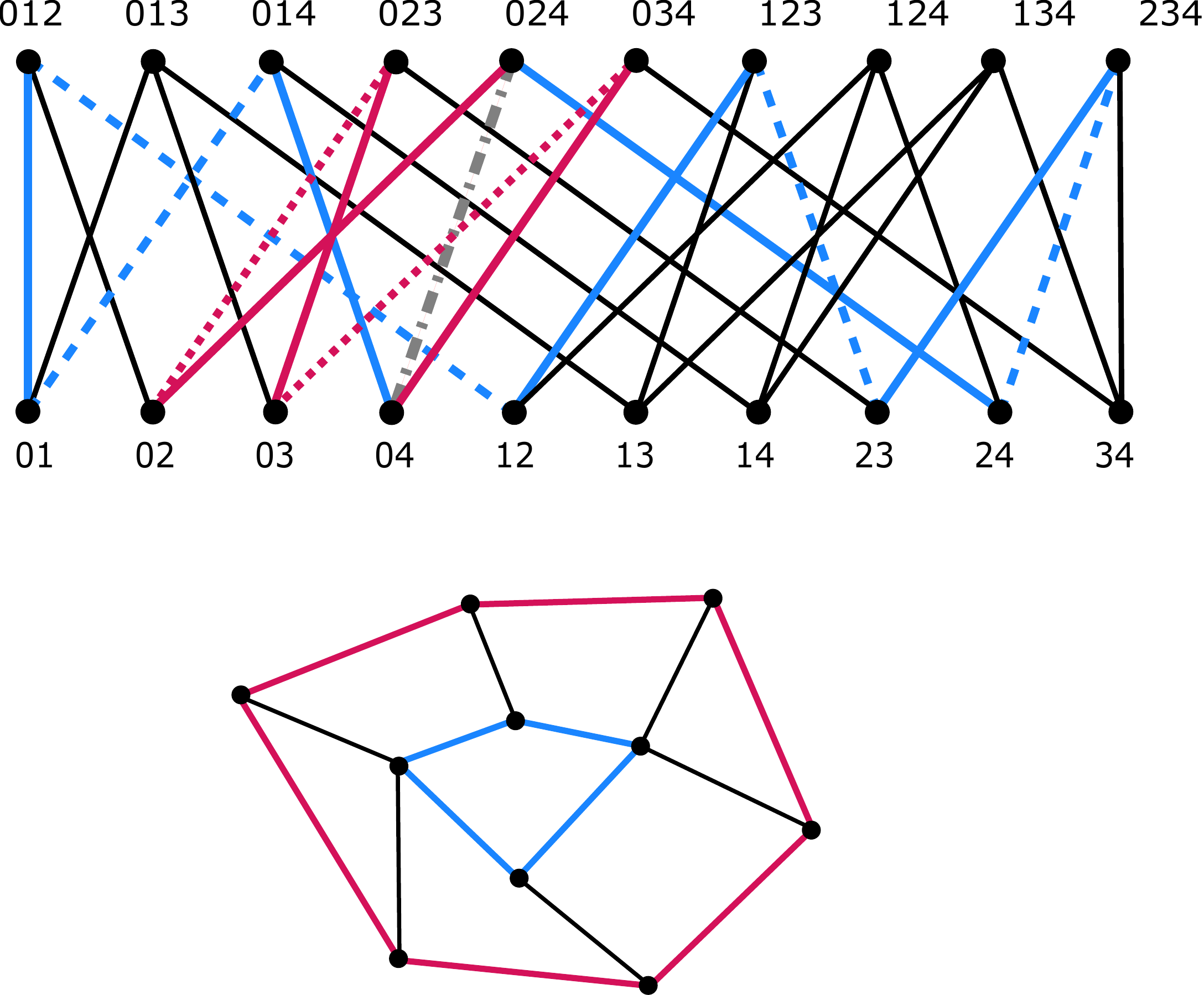}
\caption{At the top, an example where $C \in \mathcal{C}(X)$ consists of two oriented loops supported by a matching. One loop is blue, the other pink, and the common edge is in gray. Dotted and dashed edges indicate those belonging to the matching. The underlying graph is part of the face poset of the $4$-simplex. There is only one connected component, hence $\mu(C)=1$ and $\mu_1(C) = 0$. At the bottom, an example of a $2$-factor of a graph.}
\label{fig:nondisjointloops}
\end{figure}

Analogously, if $G$ is a graph, let us define $vd\mathcal{C}(G)$ as the collection of (unoriented) vertex-disjoint cycles in $G$ (including the empty set). A spanning element in $vd\mathcal{C}(G)$ is better known as a \emph{$2$-factor} of $G$. Finally, a \emph{pseudo-tree} is a graph with the homotopy type of $S^1$.
~\\

We can now define the map that will provide us with a filtration on $\text{M}(X)$:
\begin{defi}
Given a matching $m$ on $X$, define the value of the function $J$ on $m$ as the number of oriented cycles induced by $m$ on $\mathcal{F}(X)$. For an integer $k$, denote by $\text{M}_k (X)$ the set $J^{-1}([0, k])$, with the convention that $M_k(X) = \emptyset$ if $k < 0$.
\end{defi}

The next result ensures that the map $J$ does in fact provide a well-defined filtration:
\begin{prop}
The subsets $M_k (X)$ are simplicial subcomplexes of $M (X)$.
\end{prop}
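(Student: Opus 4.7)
The plan is to verify the defining property of a simplicial subcomplex, namely closure under faces. A face of a matching $m \in \text{M}(X)$ is obtained by deleting one edge, so it corresponds to a submatching $m' = m \setminus \{e\}$ with $e \in m$. Thus it will suffice to establish the numerical monotonicity $J(m') \leq J(m)$, for then $m \in M_k(X)$ immediately yields $m' \in M_k(X)$.

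To prove this inequality, I would try to show the stronger statement that every oriented cycle $\gamma$ induced by $m'$ on $\mathcal{F}(X)$ is already induced by $m$. The key observation is that the two modified graphs associated to $m$ and $m'$ agree on every edge except $e$, whose orientation is flipped. Any oriented cycle that avoids $e$ therefore transfers verbatim between the two graphs, and the whole argument reduces to ruling out oriented cycles of the $m'$-modified graph that actually traverse $e$.

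The main obstacle, and really the only one, is this last exclusion; once it is in hand, the monotonicity and hence the proposition follow immediately. The strategy is to argue by contradiction using Lemma~\ref{lem:orientedloops}. If such a $\gamma$ existed, then because $e \notin m'$ the edge $e$ would retain its original downward orientation $\sigma \to \tau$ inside $\gamma$, where $\tau$ is a codimension-one face of $\sigma$. Lemma~\ref{lem:orientedloops} forces all vertices of $\gamma$ to lie in two consecutive dimensions, so the edge of $\gamma$ following $e$ cannot be a second downward step and must instead be an upward reversed $m'$-edge, say matching $\tau$ to some coface $\tau'$. Since $m' \subseteq m$ and $e \in m$, the vertex $\tau$ would then be matched by $m$ both to $\tau'$ and to $\sigma$; but $\tau' \neq \sigma$, because $(\tau, \tau') \in m'$ while $e = (\tau, \sigma) \notin m'$. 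This contradicts the hypothesis that $m$ is a matching, ruling out cycles through $e$ and completing the argument.
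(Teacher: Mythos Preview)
Your argument is correct and follows essentially the same route as the paper: reduce to showing that deleting an edge cannot create a new oriented cycle, then use Lemma~\ref{lem:orientedloops} to force the edge of such a hypothetical cycle adjacent to $e$ to lie in $m'\subseteq m$, so that an endpoint of $e$ is matched twice in $m$. The paper phrases the contradiction for both endpoints of $e$ at once, while you only need one; otherwise the proofs coincide.
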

\begin{proof}
It suffices to show that, given a matching $m$ on $\mathcal{F}(X)$ inducing $k$ oriented cycles in $\mathcal{F}(X)$, any face of the simplex $\sigma$ spanned by the edges of $m$ supports at most $k$ oriented cycles. That is, removing an edge from $m$ cannot increase the number of cycles supported by the reduced matching. 

Suppose, without loss of generality, that removing an edge $e$ from $m$ introduces one new cycle $\gamma$ supported by $m\setminus\{e\}$. Removing $e$ switches its orientation to downwards in the face poset. By Lemma \ref{lem:orientedloops}, the edges of $\gamma$ alternately belong to $m\setminus\{e\}$; this implies that the two vertices incident to $e$ must be incident to other edges in $m$, but this contradicts the fact that $m$ was a matching to begin with. 
\end{proof}

These subcomplexes are nested, meaning that for all integers $k$, $\text{M}_k (X) \subseteq \text{M}_{k+1} (X)$. Moreover, they interpolate between the discrete Morse and matching complexes on a given $X$; more precisely, $$\text{M}_0 (X) = \mathcal{M}(X)\ \text{and}\  \text{M}_{k \gg 0} (X) = \text{M} (X).$$
We denote by $\eta(X)$ the smallest integer $k$ such that $\text{M}_k (X) = \text{M} (X)$.
~\\

We can now give some sample computations related to this filtration for certain families of graphs and simplicial complexes.

\begin{exa}[\textbf{The circle}]\label{ex:circles}
We can explicitly determine the filtration and the homotopy type of the associated simplicial complexes $\text{M}_k$ for the graph $C_n$, the cycle graph on $n$ vertices. As unoriented graphs, $\mathcal{F}(C_n) \cong C_{2n}$, which can be seen by taking the first barycentric subdivision of $C_n$. Also, $\eta(C_n) = 1$, because clearly no matching can support more than one oriented cycle in $C_{2n}$.

The homotopy type of both the Morse and matching complexes for $C_n$ have been computed by Kozlov~\cite[Prop 5.2]{kozlov1999complexes}:
\begin{equation*}
\mathcal{M}(C_n) \simeq
\begin{cases}
S^{2k-1} \vee S^{2k-1} \vee S^{3k-2}  \vee S^{3k-2} &  \text{ if } n = 3k\\
S^{2k} \vee S^{3k-1} \vee S^{3k-1} & \text{ if } n = 3k+1\\
S^{2k} \vee S^{3k} \vee S^{3k} & \text{ if } n = 3k+2
\end{cases}
\end{equation*}

\begin{equation*}
\text{M}_1(C_n) = \text{M}(C_n) \simeq 
\begin{cases}
S^{2k-1} \vee S^{2k-1} &  \text{ if } n = 3k\\
S^{2k}  & \text{ if } n = 3k+1\\
S^{2k} & \text{ if } n = 3k+2
\end{cases}
\end{equation*} 

The difference between these two filtration levels is given by the two $(n-1)$-dimensional simplices shown in the right part of Figure \ref{fig:circles}.
\begin{figure}[ht]
\centering
\includegraphics[width=11cm]{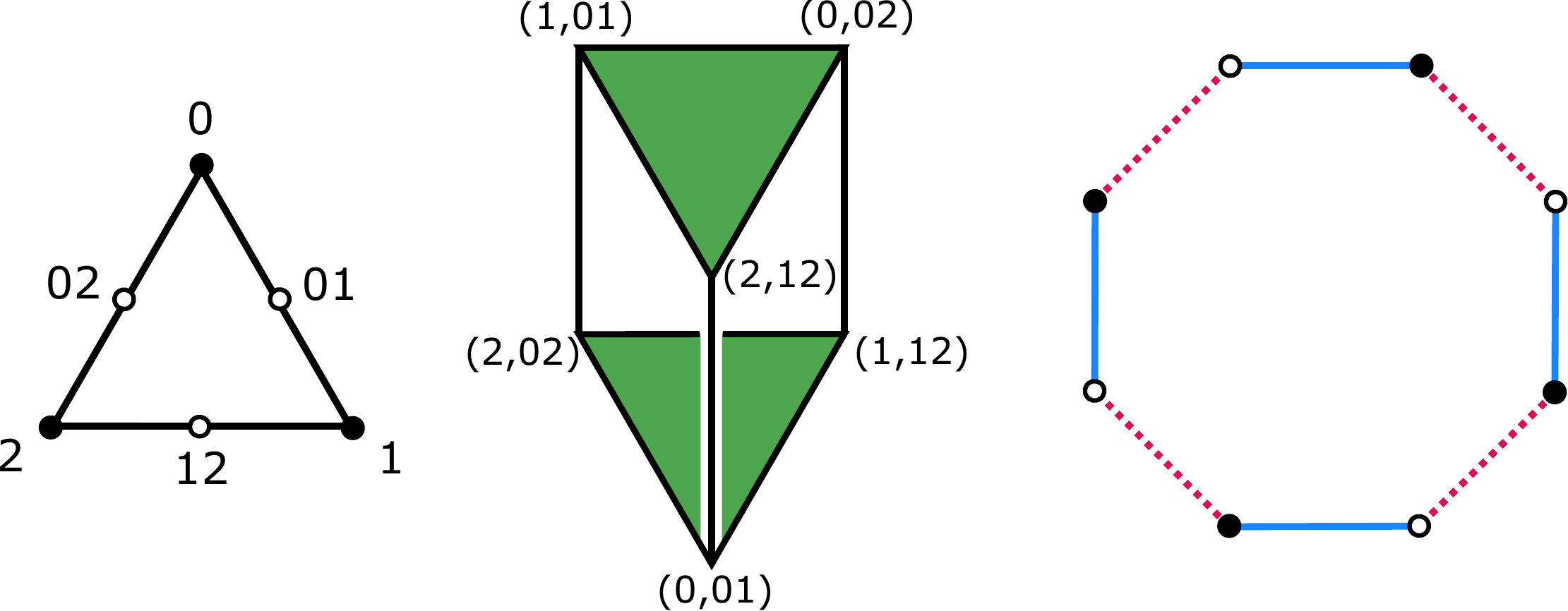}
\caption{On the left, the (barycentric subdivision of the) cycle $C_3$; in the centre, the complex $\text{M}_0(C_3)$ is given by the black $1$-skeleton; including the two green $2$-simplices gives $\text{M}_1(C_3)$.
On the right side, the only two (top-dimensional) matchings producing the simplices that appear in $\text{M}_1(C_4)$; one is pink and dotted, the other blue. Here, black and white dots represent the vertices in $V(C_4)$ and the edges' barycenters, respectively.}
\label{fig:circles}
\end{figure}
The net effect of adding these two top-dimensional simplices is to cap off two spheres generating the homology of the Morse complex.
\end{exa}

The following computations were provided using a Sage \cite{sagemath} program developed by the authors and available upon request. The notation $\Z_{(p)}^q$ denotes $q$ generators in homological degree $p$. 

\begin{exa}[\textbf{The simplices}]\label{ex:simplices}
The face poset of the $n$-simplex is the $1$-skeleton of the $(n+1)$-cube minus one vertex\footnote{We do not include the empty simplex in the diagram.}, with edges oriented away from the vertex corresponding to the whole simplex (see Figure \ref{fig:hassesimplex}).
\begin{figure}[ht]
\centering
\includegraphics[width=9cm]{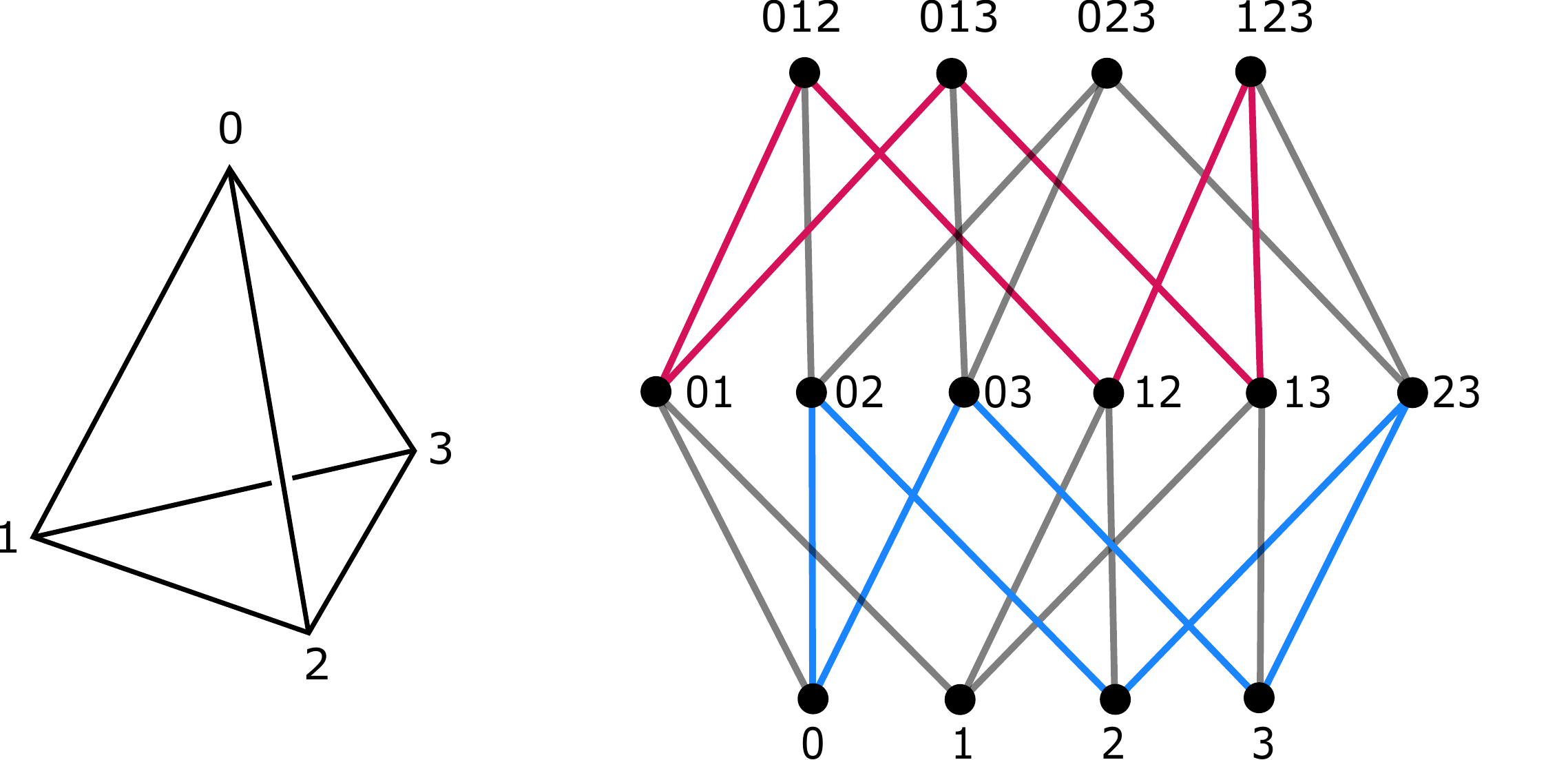}
\caption{The face posets for the $3$-simplex (with the vertex associated to the whole simplex removed);  we included two disjoint loops in blue and pink; these are supported by (at least) four matchings, corresponding to the possible ways of orienting them.}
\label{fig:hassesimplex}
\end{figure}
It is immediate to determine that $\eta (\Delta^2) = 1$ and $\eta (\Delta^3) = 2$ (note that we can already find the two cycles in Figure \ref{fig:hassesimplex} and we cannot create more because we would need six vertices disjoint from those already used); more generally it is possible to give the following coarse double bound $n-1 \le \eta(\Delta^n) \le \lfloor \frac{2^{n+1}-1}{6} \rfloor $, where $2^{n+1}$ is the number of vertices in the $(n+1)$-cube. The right inequality follows from the fact that the shortest oriented loops in $\mathcal{F}(\Delta^n)$ have length $6$.

Surprisingly, an explicit computation of the homology groups $H_* (\mathcal{M}(\Delta^n))$ seems to be currently out of reach. We can, however, complement the analysis carried out by Chari and Joswig in \cite[Sec. 5]{chari2005complexes}, by providing the computation of the reduced homology $\widetilde{H}_*(\text{M}_k(\Delta^n))$ for $n \le 3$ in Table \ref{tbl:simplexhom}.

\bgroup
\def\arraystretch{2}
\begin{table}[ht]
\begin{center}
\begin{tabular}{| Sc || Sc | Sc | Sc |}
\hline   
$X$ & $\widetilde{H}_*(\text{M}_0(X))$ & $\widetilde{H}_*(\text{M}_1(X))$ & $\widetilde{H}_*(\text{M}_2(X))$ \\
\hline
\hline
$\Delta^2$ & $\Z_{(1)}^4$ & $\Z_{(1)}^2$ & -- \\
\hline
$\Delta^3$ & $\Z_{(4)}^{99}$ & $\Z_{(4)}^{39}$ & $\Z_{(4)}^{39}$ \\
\hline
\end{tabular}
\end{center}
\caption{The reduced filtered homology of $\Delta^2$ and $\Delta^3$. }
\label{tbl:simplexhom}
\end{table}
\egroup
\end{exa}

\begin{exa}[\textbf{Complete graphs}]\label{ex:completeg}
The matching complex $\overline{\text{M}}(K_n)$ of the complete graph $K_n$ is a rather important and well-studied object, due to its many connections with several other branches of mathematics (see \cite[Ch.~1]{jonsson2007simplicial}). Somewhat surprisingly,  $\overline{\text{M}}(K_n)$ does contain torsion already for  $n=7$ (see \cite{jonsson2010more}, \cite{shareshian2007torsion} for several other related results). A table detailing the homology of $\overline{\text{M}}(K_n)$ can be found in \cite[Table~1]{jonsson2010more}.

In Table \ref{tbl:completehom} we display the (reduced) homology of the subcomplexes of $\text{M}(K_n)$ induced by the filtration $J$. Since loops in $K_n$ have length at least $3$, it follows that $\eta(K_n) = \lfloor \frac{n}{3} \rfloor$.

\bgroup
\def\arraystretch{2}
\begin{table}[ht]
\begin{center}
\begin{tabular}{| Sc || Sc | Sc | Sc |}
\hline   
$X$ & $\widetilde{H}_*(\text{M}_0(X))$ & $\widetilde{H}_*(\text{M}_1(X))$ & $\widetilde{H}_*(\text{M}_2(X))$ \\
\hline
\hline
$K_3$ & $\Z_{(1)}^4$ & $\Z_{(1)}^2$ & -- \\
\hline
$K_4$ & $\Z_{(2)}^{27}$ & $\Z_{(2)}^5$ & -- \\
\hline
$K_5$ & $\Z_{(3)}^{256}$ & $\Z_{(3)}^5 \oplus \Z_{(4)}^{23}$ & -- \\
\hline
$K_6$ & $\Z_{(4)}^{3125}$ & $\Z_{(4)}^6 \oplus \Z_{(5)}^{927}$ & $\Z_{(4)}^6 \oplus \Z_{(5)}^{967}$ \\
\hline
$K_7$ & $\Z_{(5)}^{46656}$ & $\Z_{(5)}^7 \oplus \Z_{(6)}^{23287}$ & $\Z_{(5)}^7 \oplus \Z_{(6)}^{25107}$ \\
\hline
\end{tabular}
\end{center}
\caption{The reduced filtered homology of $K_n$ for $n\leq 7$.}
\label{tbl:completehom}
\end{table}
\egroup

We point out a few things; the homology  $\widetilde{H}_*(\text{M}_0(K_{n+1}))$ is by \cite[Thm.~3.1]{kozlov1999complexes} isomorphic to $\Z_{(n-1)}^{n^n}$, which is consistent with these computations. Also, the homology of the complexes associated with $K_3$ can be computed using the simplicial complex in the central part of Figure~\ref{fig:circles}.
\end{exa}

\begin{defi}
If an oriented loop $\gamma$ in $\mathcal{F}(X)$, not sharing edges with any other loops, is supported by a matching $m$, then there is another matching $m'$, coinciding everywhere  with $m$ except on $\gamma$, where it induces the opposite orientation. The two matchings $m$ and $m'$ are said to differ by a \emph{click move}; any two matching that can be related by a finite sequence of click moves will be called \emph{click equivalent}. 
\end{defi}
As an example, the blue and pink matchings on the right of Figure \ref{fig:circles} differ by a click move.
~\\

We conclude the section with the following proposition asserting that for a graph $G$, a matched edge in $\mathcal{F}(G)$ can never belong to more than one oriented cycle supported by a matching (see also Figure \ref{fig:differenceingraphs}). This is in contrast with the face posets for any complex other than a graph, where this phenomena can occur (as in Figure \ref{fig:nondisjointloops}). As a consequence, $\eta (G)$ coincides with the maximal number of vertex-disjoint cycles in $G$, or  in other words 
\begin{equation}\label{eqn:etaforgraphs}
\eta(G) = \max_{C \in vd\mathcal{C}(G)} \{|C|\}.
\end{equation}
This quantity is well known and has been extensively studied (see e.g.~\cite{dirac1963maximal}, \cite{egawa2003covering} and references therein).

\begin{prop}\label{prop:vertexdisjoint}
If $G$ is a graph, then there is a surjective function from the set of \emph{oriented} loops in $\mathcal{F}(G)$ supported some by some matching, to  collections of \emph{unoriented} vertex-disjoint loops in $G$.
\end{prop}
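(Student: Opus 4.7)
The plan is to construct a natural projection $\Phi\colon\mathcal{C}(G)\to vd\mathcal{C}(G)$ that forgets the barycentric subdivision structure of $\mathcal{F}(G)$, then verify that it is well-defined (its image really consists of vertex-disjoint loops) and surjective.

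To define $\Phi$, fix $C\in\mathcal{C}(G)$ with supporting matching $m$. By Lemma \ref{lem:orientedloops}, each oriented loop $\gamma\subseteq C$ alternates between $0$- and $1$-dimensional simplices of $G$, so it has the form
\[
v_1 \to e_1 \to v_2 \to e_2 \to \cdots \to v_k \to e_k \to v_1,
\]
with the upward arrows $v_i\to e_i$ belonging to $m$ and the downward arrows $e_i\to v_{i+1}$ unmatched. Since each $e_i$ is an edge of $G$ with endpoints $v_i$ and $v_{i+1}$, the sequence $v_1,\dots,v_k$ traces an unoriented cycle $\overline{\gamma}$ in $G$; set $\Phi(C)=\{\overline{\gamma}:\gamma\in C\}$.

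The main step is showing that the cycles in $\Phi(C)$ are pairwise vertex-disjoint, and this is where the one-dimensionality of $G$ is essential. Suppose two distinct loops $\gamma,\gamma'\subseteq C$ share a graph-vertex $v$. In each loop, the upward arrow out of $v$ belongs to $m$, so $v$ is matched in both; since a matching is a function, this matched arrow $v\to e$ is the same in $\gamma$ and $\gamma'$. The face-poset vertex $e$ has precisely two outgoing arrows in $\mathcal{F}(G)$, one to each endpoint of the edge $e\in E(G)$; one of these endpoints is $v$ itself, and since the cycles are simple and have just arrived from $v$, both $\gamma$ and $\gamma'$ must continue to the other endpoint. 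Iterating, $\gamma$ and $\gamma'$ coincide step-by-step, a contradiction. This is the crux of the argument, and it fails for higher-dimensional complexes because simplices of dimension $\ge 2$ admit more than two codimension-one faces (compare Figure \ref{fig:nondisjointloops}).

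For surjectivity, given $D=\{D_1,\dots,D_s\}\in vd\mathcal{C}(G)$, choose an orientation on each $D_j$ and lift it to an oriented loop in $\mathcal{F}(G)$ by inserting the relevant edge-barycenters between consecutive graph-vertices, declaring the up-arrows matched and the down-arrows unmatched. Vertex-disjointness of the $D_j$ prevents any graph-vertex from being matched twice, and since vertex-disjoint cycles are automatically edge-disjoint, no edge-barycenter is matched twice either. The resulting assignment is therefore a genuine matching of $\mathcal{F}(G)$ supporting the chosen lifts, realising $D$ as $\Phi$ of the resulting collection.
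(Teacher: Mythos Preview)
Your proof is correct and follows essentially the same strategy as the paper: define a projection from oriented cycles in $\mathcal{F}(G)$ to cycles in $G$, argue that the images are vertex-disjoint because the cycles are supported by a matching, and exhibit surjectivity by orienting and lifting. The paper's own argument for vertex-disjointness is terse (essentially one clause plus a reference to Figure~\ref{fig:differenceingraphs}), whereas you spell out explicitly the forcing mechanism---the matched up-arrow out of $v$ is unique, and the subsequent down-arrow out of $e$ is forced because $e$ has only two faces---which makes transparent why the argument is specific to $1$-dimensional $X$.
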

\begin{proof}
Let $\overrightarrow{C}$ be a set of oriented loops in $\mathcal{F}(G)$, supported by a matching $m$. Let $\varphi(\overrightarrow{C})$ denote the subgraph of $G$ obtained by considering all edges in $G$ whose corresponding vertices in $\mathcal{F}(G)$ are incident to edges in the matching $m$. Then $\varphi(\overrightarrow{C})$ is composed of cycles in $G$, which are vertex-disjoint since $\overrightarrow{C}$ was supported by a matching (see Figure~\ref{fig:differenceingraphs}).

The map $\varphi$ is surjective: given any collection $C$ of vertex-disjoint loops in $G$, we can randomly orient each loop, either clockwise or counter-clockwise; to these oriented loops we can associate the matching on $\mathcal{F}(G)$ given by considering the half-edges of the corresponding loops in $B(G)$ that start from the initial vertex (with respect to the chosen orientation) and end at its barycenter. The image of a matching under $\varphi$ constructed this way is $C$.
\end{proof}
\begin{figure}[ht]
\centering
\includegraphics[width=9cm]{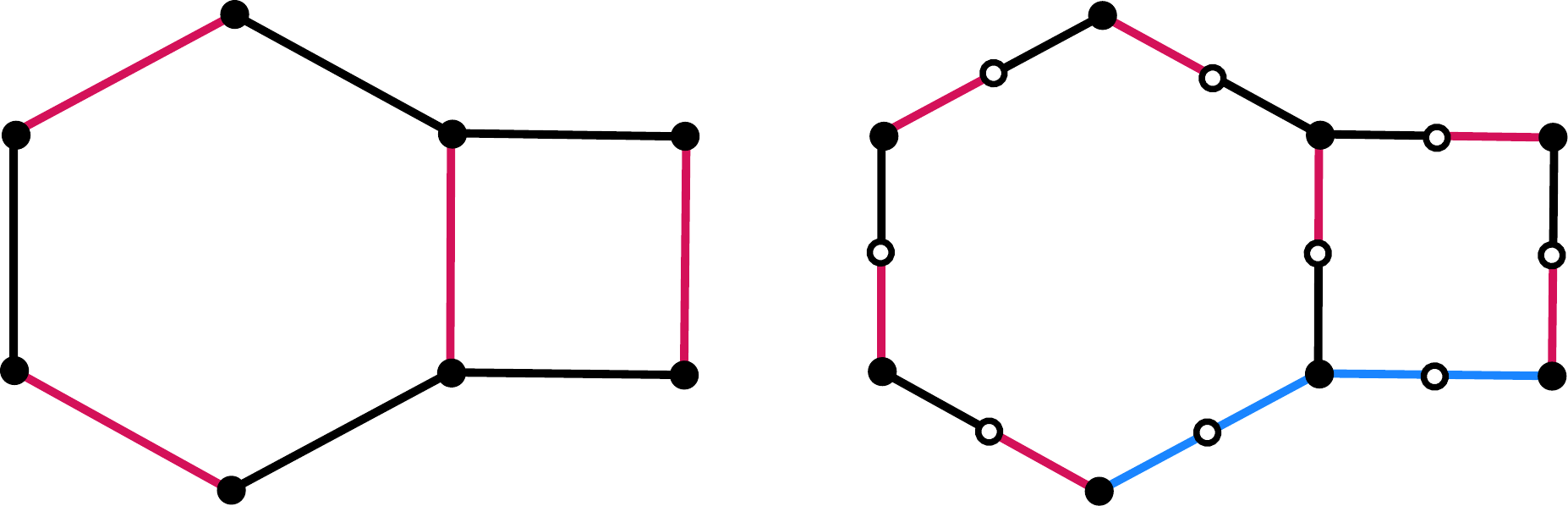}
\caption{A schematic explanation of why a matched edge in $\mathcal{F}(G)$ cannot be part of two different cycles, even if it is in $G$. There is no way of ``transporting'' the pink matching from left to right, as e.g.~it could not be extended on the two blue edges in lower-right part of the figure.
}
\label{fig:differenceingraphs}
\end{figure}

Note that any two matchings sharing the same image under $\varphi$ and coinciding outside of the oriented loops, are click equivalent.

\section{Filtered homology}\label{sec:filteredhomology}

We can use the function $J$ to define a filtration on the simplicial chain complex of $X$.
We will use $\F = \F_2$ coefficients throughout for simplicity, but the same construction also works with integer coefficients.
Define $$\widehat{C}_i^j(X) = \F \langle \sigma \in \text{M}(X) \,|\, dim(\sigma) = i, J(m_\sigma) = j \rangle,$$
and $$\widehat{C}(X) = \bigoplus_{i,j \in \N} \widehat{C}_i^j(X).$$
~\\
If $x \in \widehat{C}_i^j(X)$, we will refer to $i$ and $j$ as its \emph{homological} and \emph{filtration} degree respectively.

The simplicial differential $\partial$, defined in the usual way, can be split into a $J$-preserving component $\partial_J$ and a ``diagonal'' one $\partial_d$. The former one decreases the bi-degree $(i,j)$ by $(1,0)$, while the latter decreases the bi-degree by $(1,c)$ for some $c\ge 1$; this difference in behaviour stems from cases as in the top part of Figure \ref{fig:nondisjointloops}, where removing a single edge from a matching belonging to some cycle might decrease the number of oriented cycles by more than one. It follows from Proposition~\ref{prop:vertexdisjoint} that, in the case when $X$ is a graph, the bi-degree of $\partial_d$ as a filtered map can only be $(1,1)$.

\begin{lem}
The map $\partial_J$ is a differential on $\widehat{C}(X)$.
\end{lem}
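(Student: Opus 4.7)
The plan is to deduce $\partial_J^2=0$ from the fact that the full simplicial differential $\partial = \partial_J + \partial_d$ already satisfies $\partial^2 = 0$, by bookkeeping with the filtration degree.

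First I would make sure the decomposition is clean on the bigraded vector space $\widehat{C}(X)=\bigoplus_{i,j}\widehat{C}_i^j(X)$: by definition $\partial_J$ sends $\widehat{C}_i^j(X)$ into $\widehat{C}_{i-1}^j(X)$ (this is its defining property), while $\partial_d$ sends $\widehat{C}_i^j(X)$ into $\bigoplus_{c\geq 1}\widehat{C}_{i-1}^{j-c}(X)$. These two properties together with $\partial=\partial_J+\partial_d$ characterise both summands: $\partial_J$ is the $(-1,0)$-graded component of $\partial$, and $\partial_d$ collects all components of bidegree $(-1,-c)$ with $c\geq 1$.

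Next I would expand
\begin{equation*}
0=\partial^2=(\partial_J+\partial_d)^2=\partial_J^2+\partial_J\partial_d+\partial_d\partial_J+\partial_d^2,
\end{equation*}
and project onto the part that preserves filtration degree. On $\widehat{C}_i^j(X)$, the operator $\partial_J^2$ lands in $\widehat{C}_{i-2}^j(X)$, whereas each of $\partial_J\partial_d$, $\partial_d\partial_J$ and $\partial_d^2$ lands in $\bigoplus_{c\geq 1}\widehat{C}_{i-2}^{j-c}(X)$, i.e.\ strictly drops the filtration degree. Since the sum of all four is zero and they live in complementary bigraded summands, each component vanishes separately; in particular $\partial_J^2=0$.

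I do not expect a real obstacle here: once the bidegree of $\partial_d$ is correctly identified as strictly negative in the $j$-direction (which is immediate from the definition, and sharpened to exactly $-1$ for graphs by Proposition~\ref{prop:vertexdisjoint}), the argument is the standard one showing that the associated graded of a filtered differential is itself a differential. The only thing worth double-checking in writing up is that the filtration degree of $\partial_J\sigma$ is truly equal to $J(m_\sigma)$ and not merely bounded above by it, so that $\partial_J$ is genuinely a map of bidegree $(-1,0)$; this follows because the terms of $\partial\sigma$ lying in filtration $J(m_\sigma)$ are by construction exactly those contributing to $\partial_J\sigma$, with the remaining terms absorbed into $\partial_d\sigma$.
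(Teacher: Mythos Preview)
Your argument is correct, and it is the standard ``associated graded'' argument: once $\partial$ is decomposed as a filtration-preserving piece plus a strictly filtration-decreasing piece, projecting $\partial^2=0$ onto the $j$-preserving component immediately kills $\partial_J^2$. The only point that needed care---that $\partial_J$ really has bidegree $(-1,0)$ rather than $(-1,\le 0)$---you address correctly, since by construction $\partial_J\sigma$ is \emph{defined} to be the part of $\partial\sigma$ landing in the same $J$-level.

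The paper takes a different, more hands-on route: it identifies $(\partial_J)^2(\sigma)$ explicitly as a sum over ordered pairs of edges of $m_\sigma$ whose removal does not change the number of supported cycles, and then argues this sum vanishes over $\mathbb{F}_2$ by pairing the two orders in which each pair can be removed (with a short case check when at most one removable edge exists). Your approach is cleaner and coefficient-agnostic---it would work verbatim over $\mathbb{Z}$---and it explains \emph{why} the result is automatic for any filtered complex. The paper's approach, on the other hand, makes the combinatorics of $\partial_J$ concrete and feeds directly into the subsequent remark explaining why the analogous statement for $\partial_d$ can fail: there the two orders of removal need not land in the same $J$-level, so the pairing argument (equivalently, the bidegree projection) breaks down.
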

\begin{proof}
It is immediate to see that for any simplex $\sigma \in \widehat{C}(X)$, $(\partial_J)^2 (\sigma)$ is a sum over all pairs of edges that can be removed from the matching $m_\sigma$ on $\mathcal{F}(X)$, while preserving the number of cycles supported by $m_\sigma$. The sum is $0$ with $\text{mod} \,2$ coefficients, since if there are at least two edges not contained in the oriented cycles supported by $m_\sigma$  there is an even number of ways of choosing such pairs in an ordered fashion. On the other hand, if there is only one such edge, then the horizontal differential of $m_\sigma$ is given by removing it, and applying $\partial_J$ again sends the collection of matched edges on the loops to $0$.
\end{proof}
\begin{rmk}
It is generally not true that $\partial_d$ is a differential; this can be appreciated by considering a matching akin to the one from Figure~\ref{fig:nondisjointloops}. Consider a matching $m$ supporting two loops $C = \{\gamma_1, \gamma_2 \}$, such that $\mu (C) = 1$ (so, the two loops share at least one edge). Assume for simplicity that $\gamma_1 \cap \gamma_2$ consists of a single edge $e$. To have $\partial_d^2 (m)=0$, we rely on the fact that removing two edges from $m$ is independent of the order in which they are removed. However, this is not the case if we consider the face of (the simplex corresponding to) $m$ obtained by removing two distinct edges $\{e_1,e\}$. Removing $e$ deletes both cycles $\gamma_1$ and $\gamma_2$, so $\partial_d$ on $m\setminus\{e\}$ is $0$. However, removing $e_1$ first, then $e$ is not $0$ since after removing $e_1$ we still have one cycle, $\gamma_2$.
\end{rmk}

\begin{rmk}~\label{rmk:diagonal}
Proposition~\ref{prop:vertexdisjoint} implies that a matching on the face poset of a graph $G$ can only support disjoint oriented loops, so the aforementioned phenomenon cannot occur in the matching complex of a graph $G$. In particular $\partial_d$ is a differential on $\widehat{C}(G)$.
\end{rmk}

Now we can define several homology groups associated with this decomposition of the simplicial differential:
\begin{defi}
Denote by $\widehat{H}^k(X)$ the homology of the subcomplex of $\widehat{C}(X)$ spanned by all simplices with filtration degree $\le k$, so that $$H_*(\text{M}_k(X)) = \widehat{H}_*^k(X).$$
We can consider two other related homologies  associated to the decomposition $$\partial = \partial_J + \partial_d.$$  Define the \emph{horizontal homology} $hH(X)$ as the bi-graded homology of the complex $(\widehat{C}(X), \partial_J)$. More explicitly, $hH(X)$ is the associated graded object to the filtration $J$ on the complex computing the simplicial homology of $\text{M}(X)$.

If $G$ is a graph, we can also define the \emph{diagonal homology} $dH(G)$ as the homology of $(\widehat{C}(G), \partial_d)$ (\emph{cf}.~Remark~\ref{rmk:diagonal}).
\end{defi}

\begin{figure}[ht]
\centering
\includegraphics[width=12cm]{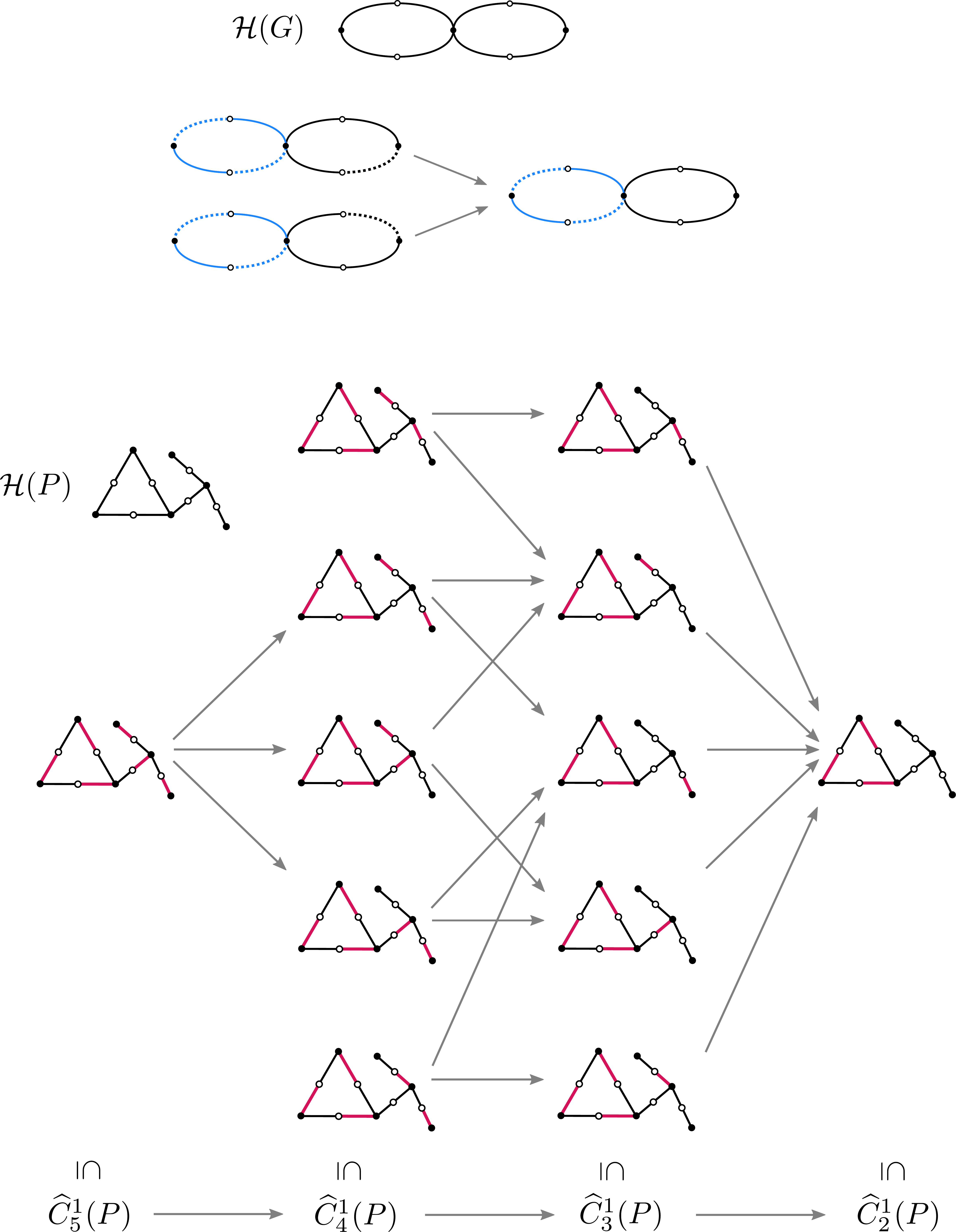}
\caption{The (unoriented) face poset for a graph $G$, which is homotopic to the bouquet of two circles, is shown at the top of the figure. Just below it, a portion of the complex $\left( \widehat{C}_*^1(G), \partial_J \right)$ is displayed, whose homology is $\F$ in homological degree $2$. Matched edges are dotted, oriented cycles are blue, and gray arrows represent the differential. In the lower part of the figure is part of the complex $\left( \widehat{C}_*^1(P), \partial_J \right)$, for a pseudo-tree $P$. All of the generators here share the same oriented cycle on the triangle. The homology of this subcomplex is trivial in all homological degrees. Choosing matchings inducing the opposite orientation on the triangle gives a separate complex with the same homology, as showed in Theorem \ref{thm:hH}.}
\label{fig:examplehomology}
\end{figure}

These homology groups potentially contain a great deal of information about the matching complex of $X$ and the intermediate complexes $\text{M}_k(X)$. Theorem~\ref{thm:hH} provides the computation of the horizontal homology $hH(X)$ in terms of simpler objects. 

As an example, if $T$ is a tree, then $hH(T) \cong  H(\mathcal{M}(T))$,
and
$dH(T) \cong \widehat{C}(\mathcal{M}(T)),$
because all simplicial chains are concentrated in filtration degree $0$, and in the latter case the diagonal differential is trivial. Before we can state our next result, we need to introduce a few terms. 

For an element $C \in \mathcal{C}(X)$, define its complement $X_C$ as the subgraph of $\mathcal{F}(X)$ given by all edges not incident to any vertices composing $C$. Note that the complement might not be the face poset of a simplicial subcomplex of $X$.

\begin{thm}\label{thm:hH}
Let $X$ be a finite simplicial complex; then
\begin{equation}\label{eqn:hH}
hH(X) \cong H(\mathcal{M}(X)) \oplus \left( \bigoplus_{C \in \mathcal{C}_\circ(X)}   \bigoplus^{2^{\mu_1 (C)}} \widetilde{H} \left( \overline{\mathcal{M}}(X_C)  \right)  \right) \oplus \left( \bigoplus_{C \in \mathcal{C}_M(X)}  \F^{2^{\mu_1(C)}}_{\langle|C|\rangle}  \right) 
\end{equation}
Here, $\widetilde{H}$ denotes reduced homology, and $\F^{(p)}_{\langle q\rangle}$ denotes $p$ generators of the homology in filtration degree $q$.
\end{thm}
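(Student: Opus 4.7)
The plan is to realize $(\widehat{C}(X), \partial_J)$ as a direct sum of subcomplexes indexed by $\mathcal{C}(X)$ and then identify each summand with a simpler object. For a simplex $\sigma \in \text{M}(X)$ the matching $m_\sigma$ supports a unique element $C(\sigma) \in \mathcal{C}(X)$, so setting $\widehat{C}^C(X) = \F\langle \sigma \mid C(\sigma) = C\rangle$ gives $\widehat{C}(X) = \bigoplus_{C} \widehat{C}^C(X)$. By the very definition of $\partial_J$, a face of $\sigma$ appears with nonzero coefficient only when the deleted edge lies off every cycle of $C(\sigma)$, so $\partial_J$ preserves this splitting and
$$hH(X) \cong \bigoplus_{C \in \mathcal{C}(X)} H_*\bigl(\widehat{C}^C(X),\partial_J\bigr).$$
The theorem then follows by computing each summand separately.

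For $C = \emptyset$ the summand is tautologically the simplicial chain complex of the discrete Morse complex $\mathcal{M}(X)$ (acyclic matchings are those supporting no cycles), contributing $H(\mathcal{M}(X))$. For $C \neq \emptyset$, I would show that every matching $m$ with $C(m) = C$ admits a canonical decomposition $m = m_C \sqcup m'$, where $m_C$ consists of the matched edges lying along cycles of $C$ and $m'$ is a matching on the complement $X_C$. Since no edge of $X_C$ is incident to a vertex of $C$, the piece $m'$ cannot interact with $m_C$ to produce new cycles, so $m$ supports exactly $C$ if and only if $m'$ is an acyclic matching on $X_C$, that is, a (possibly empty) simplex of $\overline{\mathcal{M}}(X_C)$. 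Because $\partial_J$ acts by deletion of edges of $m'$ only, the map $\sigma \mapsto m'$ identifies $(\widehat{C}^C(X), \partial_J)$ with a direct sum of copies of the augmented simplicial chain complex of $\overline{\mathcal{M}}(X_C)$, shifted homologically by $|m_C|$, with one copy for each admissible choice of $m_C$.

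The central combinatorial step is to show that the number of admissible $m_C$ is exactly $2^{\mu_1(C)}$. Each isolated cycle of $C$ admits both of its alternating matchings, the click move of Definition 2.8 producing an independent binary choice and hence a factor of $2^{\mu_1(C)}$. For the remaining cycles, partitioned into edge-sharing clusters, I would argue that fixing the matched/unmatched status of any single edge in a cluster forces the status of every other edge via alternation, and that the alternative global choice creates a vertex conflict: at a vertex shared between two edge-sharing cycles, the two alternating patterns want to match the vertex via different non-shared edges, producing two incident matched edges in the combined $m_C$. Since $C$ was assumed to be supported by some matching, exactly one global choice per cluster survives.

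Finally, I would split the nonempty collections into $\mathcal{C}_\circ(X)$ and $\mathcal{C}_M(X)$. For $C \in \mathcal{C}_\circ(X)$ the complement $X_C$ has edges, so $\overline{\mathcal{M}}(X_C)$ is a nonempty simplicial complex whose augmented chain complex computes $\widetilde{H}(\overline{\mathcal{M}}(X_C))$, yielding the middle summand. For $C \in \mathcal{C}_M(X)$ the complement has no edges and the augmented complex is a single copy of $\F$ with zero differential, contributing $\F^{2^{\mu_1(C)}}_{\langle |C|\rangle}$ in filtration degree $|C|$. Assembling these three pieces gives equation \eqref{eqn:hH}. The principal obstacle is the rigidity argument for edge-sharing clusters: the constraints combine a parity condition on every shared edge with the incidence condition at every shared vertex, and the argument must simultaneously rule out the globally \emph{flipped} matching and confirm existence of the admissible one whenever $C$ belongs to $\mathcal{C}(X)$.
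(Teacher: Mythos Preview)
Your proposal is correct and follows essentially the same route as the paper's proof: split $(\widehat{C}(X),\partial_J)$ according to the supported cycle collection $C$, identify the $C=\emptyset$ piece with $H(\mathcal{M}(X))$, count the admissible $m_C$ as $2^{\mu_1(C)}$ via clicks on isolated cycles versus rigidity on edge-sharing clusters, and recognise each remaining summand as the augmented chain complex of $\overline{\mathcal{M}}(X_C)$ (degenerating to a single $\F$ when $C$ is maximal). You are in fact slightly more explicit than the paper in two places --- the decomposition $m=m_C\sqcup m'$ with the observation that $m'$ must be acyclic on $X_C$, and the augmented-complex interpretation explaining why reduced homology appears --- and you correctly flag the cluster-rigidity step as the point requiring the most care, which the paper dispatches in a single sentence.
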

\begin{proof}

We prove this theorem by considering the decomposition of oriented loops $\mathcal{C}(X) = \{\emptyset\} \sqcup  \mathcal{C}_\circ(X) \sqcup \mathcal{C}_M(X)$. If $C\in\mathcal{C}(X)$ is the empty set, then we are computing the homology of the chain complex of the discrete Morse complex, because this is precisely the simplicial complex generated by matchings supporting no oriented loops in $\mathcal{F}(X)$. This gives us the first term, $H(\mathcal{M}(X))$, in Equation \ref{eqn:hH}.

Before we can consider the contribution to $hH(X)$ from nonempty $C\in\mathcal{C}(X)$, observe that for such a $C$, the differential $\partial_J$ preserves the splitting of $\widehat{C}(X)$ into elements that support $C$ and those that do not. Moreover, the differential $\partial_J (m)$ of a matching $m$ supporting the  loops $\{\gamma_r\}$ that compose $C$ is either empty (if all edges comprising $m$ are on the loops) or a sum of other matchings sharing precisely the same edges on the cycles as $m$. 

As discussed in Example \ref{ex:circles}, if a cycle $\gamma$ does not share edges with any other one, there are exactly two different ways that a matching can support $\gamma$ (corresponding to the two possible orientations on $\gamma$); hence for each $C \in \mathcal{C}(X)$ the subcomplex given by the matchings that support $C$ splits further into $2^{\mu_1(C)}$ subcomplexes.  This follows from the fact that if two loops supported by a matching share at least one edge, then there is only one possible orientation that can be induced on them by any matching supporting both.

Moreover, if $C \in \mathcal{C}_M(X)$, then by definition all edges belonging to $m$ are contained in some loop in $C$, and thus their $J$-boundary is trivial. Since such a maximal set of cycles is spanning, it follows that all matchings supporting $C$ cannot be boundaries, and we obtain the right summand in Equation \eqref{eqn:hH}.

To conclude, we need to determine the homology of the subcomplexes given by matchings supporting non-maximal collection of cycles. Given a matching $m$, consider the subcomplex $\widehat{C}(\{\gamma_r\}) \subseteq \widehat{C}(X)$ spanned by all matchings that induce the same non-empty set of oriented loops $\{\gamma_r\}$ (which implies that they share the same edges as $m$ on these loops). As discussed above, the restriction $\restr{\partial_J}{{\widehat{C}(\{\gamma_r\})}}$ is still a differential; moreover the map $$f: C_*(\overline{\mathcal{M}}(X_C)) \longrightarrow \widehat{C}(\{\gamma_r\})$$ sending a matching $m^\prime \in C_*(\overline{\mathcal{M}}(X_C))$ to the matching $f(m') \in \widehat{C}(\{\gamma_r\})$ obtained by completing $m^\prime$ on the loops as $m$ is clearly a bijective chain map. The empty matching on $X_C$ gets mapped by $f$ to a generator of the complex, which is the unique $J$-boundary of all matchings of degree $1$ in $\widehat{C}(\{\gamma_r\})$ -- that is, all matchings that have exactly one edge of $\mathcal{F}(X)$ not on the oriented loops $\{\gamma_r \}$. This extra element has the effect of adding a ``terminal generator'' to the complex, and hence we get the reduced homology of $\overline{\mathcal{M}}(X_C)$ in the central summand of Equation~\eqref{eqn:hH}, rather than the usual homology (see the top of Figure~\ref{fig:examplehomology} for an example).
\end{proof} 

In the special case where the simplicial complex is a graph $G$, we can relate the decomposition of Theorem \ref{thm:hH} to the set $vd\mathcal{C}(G)$, using the correspondence provided by Proposition \ref{prop:vertexdisjoint}. In analogy with the previous result, let us denote $vd_\circ(G)$ the collection of non-maximal vertex disjoint cycles in $G$.

\begin{cor}\label{cor:hHgraphs}
Let $G$ be a graph, and denote by $G_C = B(G) \setminus B(C)$; then  
\begin{equation}\label{eqn:hHgraphs}
hH(G) \cong H(\mathcal{M}(G)) \oplus \left( \bigoplus_{C \in vd_\circ(G)}   \bigoplus^{2^{|C|}} \widetilde{H} \left( \overline{\mathcal{M}}(G_C)  \right)  \right) \oplus \left( \bigoplus_{C\;2\text{-factor of } G}  \F^{2^{|C|}}_{\langle|C|\rangle}  \right) 
\end{equation}
\end{cor}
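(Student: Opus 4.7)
The plan is to derive Corollary~\ref{cor:hHgraphs} as a direct specialisation of Theorem~\ref{thm:hH} to the case $X=G$, using Proposition~\ref{prop:vertexdisjoint} as the dictionary that translates the oriented-cycle data $\mathcal{C}(G)$ of the theorem into the unoriented vertex-disjoint cycle data $vd\mathcal{C}(G)$ of the corollary. No new chain-level construction is required: the argument is essentially bookkeeping, broken into three checks.

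First I would match the indexing sets. Proposition~\ref{prop:vertexdisjoint} supplies a surjection $\varphi$ from oriented cycle collections in $\mathcal{F}(G)$ supported by matchings onto unoriented vertex-disjoint cycle collections in $G$. Once one observes that the exponent $2^{\mu_1(C)}$ in Theorem~\ref{thm:hH} already bundles the independent choices of orientation on the edge-disjoint cycles, this surjection plays the role of a bijection between indexing sets at the level of the decomposition in~\eqref{eqn:hH}. In particular, ``maximal'' on the left matches ``spanning'' on the right, so $\mathcal{C}_\circ(G) \leftrightarrow vd_\circ(G)$ and $\mathcal{C}_M(G)$ corresponds to the $2$-factors of $G$ by definition.

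Second I would verify that $\mu_1(C) = |C|$ for every $C \in \mathcal{C}(G)$. This is immediate from Proposition~\ref{prop:vertexdisjoint} together with Remark~\ref{rmk:diagonal}: any two cycles in $\mathcal{F}(G)$ supported by a common matching project via $\varphi$ to vertex-disjoint cycles in $G$, and are therefore pairwise edge-disjoint in $\mathcal{F}(G)$, so every cycle in $C$ is ``free'' and contributes to $\mu_1$. Consequently each exponent $2^{\mu_1(C)}$ in Theorem~\ref{thm:hH} becomes $2^{|C|}$ in the graph setting. A short check then shows that the complement $X_C$ of the theorem coincides, for $X=G$, with $G_C := B(G)\setminus B(C)$: since $\mathcal{F}(G)=B(G)$, the vertices of $\mathcal{F}(G)$ touched by the edges composing $C$ are exactly the vertices of $B(C)$.

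Substituting these three identifications into Equation~\eqref{eqn:hH} produces Equation~\eqref{eqn:hHgraphs}. The step I would verify most carefully -- and the only point where an error could creep in -- is the first: ensuring that the correspondence between $\mathcal{C}(G)$ (with its orientations absorbed into the factor $2^{\mu_1(C)}$) and $vd\mathcal{C}(G)$ really is an exact bijection of indexing sets, so that no summand of the horizontal homology is over- or under-counted. The remaining two checks are essentially combinatorial translations that leave no room for surprises.
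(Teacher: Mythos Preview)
Your proposal is correct and follows essentially the same route as the paper: specialise Theorem~\ref{thm:hH} to $X=G$ and use Proposition~\ref{prop:vertexdisjoint} to translate $\mathcal{C}(G)$ into $vd\mathcal{C}(G)$, identifying maximal collections with $2$-factors and checking that $X_C=G_C$. The paper's own proof is a two-line remark that leaves the identity $\mu_1(C)=|C|$ implicit; you make this step explicit, which is a genuine clarification rather than a different argument.
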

\begin{proof}
This is just a restatement of Theorem \ref{thm:hH}, after noting that $X_C$ coincides with $G_C$, and that by Proposition~\ref{prop:vertexdisjoint}, the collection of maximal oriented cycles in $\mathcal{F}(G)$ corresponds to $2$-factors in $G$.
\end{proof}

In particular, the rank of $hH(G)$ is always bounded below by the sum $$\sum_{C\;2\text{-factor of } G} 2^{|C|}.$$

We collect here some simple computations of $hH$. 
\begin{exa}\label{ex:computehH}

If $X = C_n$, then we have seen already that $$hH_*^0(C_n) = H_*(\mathcal{M}(C_n)) = H_*(\overline{\mathcal{M}} (C_{2n}))$$ in Example~\ref{ex:circles}. The only other non-trivial group is $\Z^2$ in bi-degree $(n-1,1)$, generated by the two maximal simplices.

In a slightly more general case, if $P$ is a pseudo-tree, then we can write $P = C_P \cup_{i=1}^m T_i$, where $C_P$ is a cycle, and the $T_i$ are disjoint trees, each sharing a unique vertex $v_i$ with $C_P$.
Then, by Corollary~\ref{cor:hHgraphs}, if we let $\widetilde{T}_i = B(T_i) \setminus v_i,$ $$hH_*^1 (P) \cong 
\bigoplus^2 \widetilde{H}_* \left( \overline{\mathcal{M}}(\cup_{i=1}^m T_i) \right) \cong 
\bigoplus^2 \widetilde{H}_* \left( \overline{\text{M}}(\cup_{i=1}^m T_i) \right).$$
Note that by \cite[Thm.~4.13]{marietti2008uniform}, the latter  is isomorphic to $\Z$ (if the homotopy type of the  complex is that of a pair of points), or to the reduced homology of two disjoint wedges of spheres.
\end{exa}

The dimension of the simplices in complexes with non-zero  filtration can be bounded from below; in what follows, $\ell(\gamma)$ will denote the length of a loop $\gamma$.
\begin{lem}
If $G$ is a graph, then the subcomplex of $\widehat{C}(G)$ spanned by simplices supporting a fixed $C \in vd\mathcal{C}(G)$ is trivial in all dimensions strictly less than $ -1 + \sum_{\gamma \in C} \ell (\gamma)$. 

In particular, if $G$ is a graph without self-loops, the simplices in $C_*^j(G)$ have dimension $\ge 2j-1$; if furthermore $G$ is simple, than $C_*^j(G)$ is generated by simplices of dimension $\ge 3j-1$.
\end{lem}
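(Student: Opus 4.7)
The plan is a direct edge-counting argument: a $d$-dimensional simplex in $\widehat{C}(G)$ corresponds to a matching on $\mathcal{F}(G) = B(G)$ with exactly $d+1$ edges, so it suffices to bound from below the number of edges of any matching that supports the prescribed collection $C \in vd\mathcal{C}(G)$.

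First I would fix a single cycle $\gamma \in C$ of length $\ell(\gamma)$ in $G$ and track its lift under the isomorphism $\mathcal{F}(G) \cong B(G)$. Since each edge of $G$ becomes a barycenter vertex in $B(G)$ adjacent to its two original endpoints, $\gamma$ lifts to an unoriented cycle of length $2\ell(\gamma)$ in $\mathcal{F}(G)$, alternating between vertices of $G$ and edge-barycenters. Either orientation of $\gamma$ induces a corresponding oriented loop in $\mathcal{F}(G)$, compatible with the orientation convention of the face poset. By Lemma~\ref{lem:orientedloops}, any matching supporting this oriented loop contains exactly half of its edges, placed alternately, i.e.~exactly $\ell(\gamma)$ edges of $B(G)$.

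Next I would invoke Proposition~\ref{prop:vertexdisjoint}: the cycles comprising $C$ are pairwise vertex-disjoint in $G$, so their lifts in $B(G)$ are pairwise vertex-disjoint and therefore edge-disjoint. Summing over $\gamma \in C$ shows that every matching supporting $C$ contains at least $\sum_{\gamma \in C} \ell(\gamma)$ edges, giving the stated dimension bound $d \ge -1 + \sum_{\gamma \in C} \ell(\gamma)$.

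For the ``in particular'' statements, a simplex in $C_*^j(G)$ is, by the definition of the filtration $J$ together with Proposition~\ref{prop:vertexdisjoint}, a matching whose associated unoriented collection in $vd\mathcal{C}(G)$ has exactly $|C| = j$ components. If $G$ has no self-loops, every cycle in $G$ has length at least $2$, so $\sum_{\gamma \in C} \ell(\gamma) \ge 2j$; if $G$ is furthermore simple, the shortest cycle is a triangle and the sum is at least $3j$. Substituting into the previous bound yields the two claimed inequalities. There is no serious obstacle: the only technical subtlety is tracking the length-doubling between $\gamma \subseteq G$ and its lift in $B(G) = \mathcal{F}(G)$, which is settled once one combines Lemma~\ref{lem:orientedloops} with the explicit form of the face poset of a graph.
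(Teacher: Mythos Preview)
Your argument is correct and follows essentially the same route as the paper: both proofs invoke Proposition~\ref{prop:vertexdisjoint} to reduce to disjoint cycles in $G$, observe that each $\gamma$ contributes $\ell(\gamma)$ matched edges in $\mathcal{F}(G)$, and then plug in the minimal cycle lengths $2$ and $3$ for the two special cases. Your version is simply more explicit about the length-doubling $\gamma \mapsto B(\gamma)$ and the use of Lemma~\ref{lem:orientedloops} to justify that exactly half of the $2\ell(\gamma)$ edges belong to the matching, a step the paper leaves implicit.
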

\begin{proof}
By Proposition~\ref{prop:vertexdisjoint}, a matching belonging to $\widehat{C}^j_*$ supports exactly $j$ disjoint cycles $\{\gamma_i\}_{i = 1,\ldots ,j}$ on $G$. The number of edges matched on $\mathcal{F}(G)$ for each cycle $\gamma_i$ is given by $\ell(\gamma_i)$, therefore the minimal number of edges in a such a matching is given by $\sum_{i=1}^j \ell(\gamma_i)$. 

The second part of the statement follows by noting that, in a graph without self-loops, all cycles have length $\ge 2$, and if the graph is also simple, than the bound can be improved to $\ge 3$ because multiple edges between pairs of vertices are not allowed.
\end{proof}

~\\
It is possible to consider the \emph{decategorification} of $hH(X)$ (\emph{cf.} \cite{khovanov2000categorification}), or in other words the polynomial obtained as the graded Euler characteristic of $hH(X)$, defined as  $$\chi_t (hH(X)) = \sum_{i,j \in \N} (-1)^i rk(hH_i^j(X)) \cdot t^j.$$

\sloppy
That is, the coefficient of $t^m$ is the Euler characteristic of the chain complex $(\oplus_{i \in \N} \widehat{C}_i^m(X), \partial_J)$. In particular, the constant term is $\chi(\mathcal{M}(X))$. 

If, $G$ is a graph, denote by $\rho_k(G)$ the number of rooted spanning forests in $G$ with exactly $k$ edges. Then, using \cite[Thm.~7.5]{biggs1993algebraic} combined with Proposition~\ref{prop:chari} we get the equality:  
\begin{equation}
\restr{\chi_t(hH(G))}{t=0} = \sum_{k = 0}^{|E(G)|}(-1)^k \cdot \rho_k(G) = p_G(1),
\end{equation}
\fussy
where $p_G(x) = det(L-x\cdot \text{Id})$ is the characteristic polynomial of the Laplacian matrix of $G$. In fact, if we write 
\begin{equation}
p_G(x) = \sum_{i = 0}^{|E(G)|} c_{i} \cdot x^{|E(G)|-i},
\end{equation}
then $(-1)^k c_k = \rho_k(G)$.

More generally, whenever $j>0$, by Corollary~\ref{cor:hHgraphs} the coefficient of $t^j$ in $\chi_t(hH(G))$ for a graph $G$ is given by  $$\sum_{\substack{C \in vd_\circ \mathcal{C}(G)\\ |C| = j}} 2^j \cdot \left( \chi(\overline{\mathcal{M}}(G_C)) + (-1)^{\ell(C) -1} \right) + \sum_{\substack{C\;2\text{-factor of } G\\ |C| = j}} (-1)^{\ell(C) -1} \cdot 2^j.$$
Here, the extra $(-1)^{\ell(C) -1}$ is due to the presence of the terminal elements described in the proof of Theorem~\ref{thm:hH}, and $\ell(C)$ denotes the sum of the lengths of the cycles in $C$. We point out that Equation~\eqref{eqn:etaforgraphs} implies that the maximal degree of $\chi_t(hH(G))$ coincides with $\eta(G)$.

\begin{exa}\label{ex:decat}
We  can easily compute the decategorification of $hH$ for the cycle graphs $C_n$:  there are only two simplices in $J$-degree $1$, and 
\begin{equation}\label{eq:casesdeterminant}
p_{C_n}(1) = \begin{cases} \:0 & \text{ if } n \equiv 0\:\;\;\:\:\:\: \text{mod}\; 6\\
-1 & \text{ if } n \equiv 1,5\;\:\: \text{mod}\; 6\\
-3 & \text{ if } n \equiv 2,4\;\:\: \text{mod}\; 6\\
-4 & \text{ if } n \equiv 3\;\;\:\:\:\:\:\text{mod}\; 6
\end{cases}
\end{equation}
This can be deduced by computing the characteristic polynomial of the Laplacian of $C_n$, together with some elementary linear algebra. More precisely, $p_{C_n}(1)$ is the determinant of the matrix $L_n-\text{Id}$, where 
$$L_n = \begin{pmatrix}
2  & -1 & 0 & \ldots & 0 & -1\\
-1 & 2 & -1 & \ldots & 0 & 0\\
0 & -1  & 2 &-1 & \ldots  & 0\\
\vdots & 0 &  \ddots & \ddots & 2 & -1 \\
-1 & 0 & \ldots & 0 &-1 & 2 
\end{pmatrix}.$$
The result in Equation~\eqref{eq:casesdeterminant} then follows by considering the Laplace expansion of the determinant, combined with the fact that the determinant $D(n)$ of the tri-diagonal matrix $\text{Trid}_n(-1,1,-1)$ satisfies the recursive relation $$D(n) = D(n-1) - D(n-2),$$ with initial conditions $D(1) = 1$ and $D(2) = 0$. The sequence $D(n)$ is easily seen to be periodic of period $6$.

Therefore, $$\chi_t(C_n) = (-1)^{n-1} 2t + p_{C_n}(1).$$
\end{exa}

~\\
The diagonal homology appears to be more complex; however, it is possible to give a graph-theoretic interpretation of its ranks for some specific class of graphs. 
\begin{prop}
The top-dimensional diagonal homology group of a pseudo-tree $P$ is free of rank $\rho_{|E(P)|-1}(P) -2$. 
Furthermore, if $P = C_n$ the homology $dH_*^*(C_n)$ is trivial in filtration degree $1$, and the  ranks in bidegree $(i,0)$ are $\rho_{i+1}(C_n)$ if $i < n-2$, and $\rho_{n-1} (C_n) -2$ if $i = n-2$;  these are the only non-trivial groups.
\end{prop}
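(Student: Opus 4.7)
The plan is to treat the two parts of the statement together, since the $C_n$ computation is essentially the pseudo-tree argument with no trees attached. The key structural input is that a pseudo-tree $P$ contains a unique cycle $C_P$, so $\eta(P)=1$ and the filtration has only levels $0$ and $1$; by Remark~\ref{rmk:diagonal}, $\partial_d$ is a differential of bi-degree $(-1,-1)$ on $\widehat{C}(P)$, and $\partial_d^2 = 0$ holds trivially. The central combinatorial input I would establish first as an auxiliary lemma is: \emph{for any tree $T$ with a chosen vertex $v$, the graph $B(T)\setminus\{v\}$ admits a unique perfect matching.} I would prove this by induction on $|V(T)|$ after rooting $T$ at $v$: every midpoint adjacent to $v$ becomes a leaf in $B(T)\setminus\{v\}$, hence is forced to be matched to its other endpoint, which reduces to the same problem on each subtree below a child of $v$.

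Using this lemma I can determine the top chain group $\widehat{C}^1_{|E(P)|-1}(P)$ explicitly. A matching $m$ supporting $C_P$ is forced on the even cycle $B(C_P)$ to be one of two alternating patterns (the two orientations of $C_P$), which saturates every vertex on the cycle. Hence the remainder of $m$ lies in $\bigsqcup_i (B(T_i)\setminus\{v_i\})$, and by the lemma each orientation extends uniquely to a maximum matching on $B(P)$ of size $|E(P)|$. So $\widehat{C}^1_{|E(P)|-1}(P)$ has rank $2$, spanned by matchings $m_1,m_2$ which agree on the trees but use disjoint edges on $B(C_P)$. The differential
\[
\partial_d(m_i) \;=\; \sum_{e \in m_i \cap B(C_P)} \bigl( m_i \setminus \{e\} \bigr)
\]
is a sum of $\ell(C_P)$ distinct codimension-one faces in $\widehat{C}^0_{|E(P)|-2}(P)$, because removing any such $e$ destroys the cycle while removing an edge off $B(C_P)$ keeps the cycle supported and contributes to $\partial_J$ instead. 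Since $m_1$ and $m_2$ share no edge on $B(C_P)$, the faces appearing in $\partial_d m_1$ and $\partial_d m_2$ form disjoint subsets, so $\partial_d$ is injective on this two-dimensional source. The chain groups vanish above $(|E(P)|-1,1)$ and $(|E(P)|-2,0)$, the latter because a maximum acyclic matching of $\mathcal{F}(P)$ has only $|E(P)|-1$ edges by Proposition~\ref{prop:chari}. This forces $dH^1_{|E(P)|-1}(P)=0$ and identifies the top-dimensional diagonal homology as $\widehat{C}^0_{|E(P)|-2}(P)/\mathrm{im}(\partial_d)$, free of rank $\rho_{|E(P)|-1}(P)-2$ by Proposition~\ref{prop:chari}.

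Specialising to $P=C_n$ is then immediate. With no trees attached, $\widehat{C}^1_i(C_n)$ vanishes outside $i=n-1$, where it is the rank-two group just analysed; together with injectivity of $\partial_d$ this gives $dH^1_*(C_n)=0$ in every homological degree. Moreover $\mathrm{im}(\partial_d)$ lives entirely in bi-degree $(n-2,0)$, so for $i<n-2$ the cokernel is the whole of $\widehat{C}^0_i(C_n)$, which by Proposition~\ref{prop:chari} has rank $\rho_{i+1}(C_n)$; at $i=n-2$ the previous paragraph yields rank $\rho_{n-1}(C_n)-2$; and for $i\ge n-1$ both sides vanish since $B(C_n)=C_{2n}$ carries no acyclic matching of size $n$. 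The main obstacle is the uniqueness lemma for $B(T)\setminus\{v\}$: while it reduces to an iterated leaf-match argument, one must carefully verify that the midpoints adjacent to $v$ really become forced leaves after deletion, which is where the barycentric subdivision plays an essential role; once this is in place the rest is a transparent rank computation of $\partial_d$ on a two-dimensional source.
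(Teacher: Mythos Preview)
Your proof is correct and follows essentially the same route as the paper: identify the two perfect matchings of $B(P)$ as the only generators in bi-degree $(|E(P)|-1,1)$, show $\partial_d$ is injective on them, and read off the rank of the quotient using Proposition~\ref{prop:chari}. Your treatment is in fact more thorough than the paper's in two places: you supply the auxiliary lemma that $B(T)\setminus\{v\}$ has a unique perfect matching (the paper simply asserts that $B(P)$ has exactly two perfect matchings), and your injectivity argument via disjoint supports of $\partial_d m_1$ and $\partial_d m_2$ is cleaner than the paper's one-line ``maps them to distinct elements''.
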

\begin{proof}
The first barycentric subdivision of a pseudo-tree has exactly two perfect matchings, which are related by a click move on the cycle of $P$ (as in Figure~\ref{fig:circles}). Therefore, the complex has only two elements in top bi-degree $(|E(P)| -1, 1)$. The diagonal differential maps them to distinct elements (those obtained by removing a single edge from the cycle), hence it is injective. 
The diagonal differential from $\hat{C}_{|E(P)| -2}^0$ is $0$ since there are no loops to remove (the same is true for all other complexes in filtration degree $0$). Since the diagonal differential from $\hat{C}_{|E(P)| -1}^1 $ is injective, the first result follows.

By combining \cite[Thm.~7.5]{biggs1993algebraic} and Proposition~\ref{prop:chari}, there are exactly $\rho_{|E(P)|-1}(P)$ elements in bi-degree $(|E(P)|-2,0)$, hence the first statement follows.

For the second part, let us note that $\widehat{C}_*^1(C_n)$ is only non-trivial in homological degree $n-1$ (\emph{cf}.~Example~\ref{ex:computehH}), where it contains two basis elements. We can then conclude as in the previous case, by recalling that there are exactly $\rho_{i+1}(C_n)$ elements in bi-degree $(i,0)$ for $i = 0, \ldots, n-2$.
\end{proof}

\begin{figure}[ht]
\centering
\includegraphics[width=5.5cm]{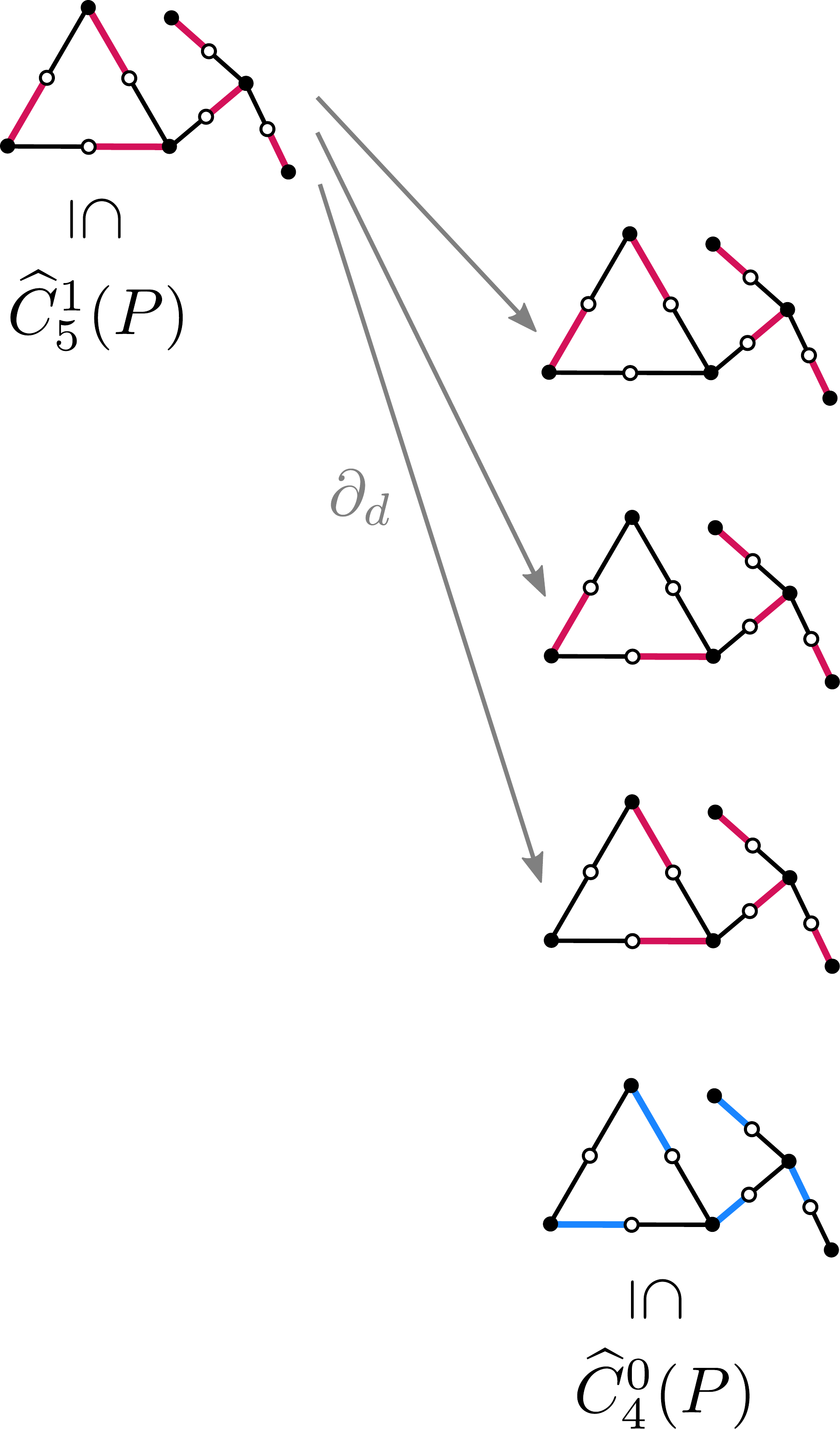}
\caption{A portion of the diagonal complex $\left(\widehat{C}_*^* (P), \partial_d \right)$ for the pseudo-tree from Figure~\ref{fig:examplehomology}. In the top-left we display one of the two chains in maximal bi-degree (the other one is obtained by clicking the length $3$ cycle). The gray arrows represent the diagonal differential; we also included in blue one (of the many) elements in $\widehat{C}_4^0(P)$ not belonging to the image of $\partial_d$.}
\label{fig:diagonalpseudotree}
\end{figure}

\subsection*{Acknowledgements} DC is supported by the European Research Council (ERC) under the European Unions Horizon 2020 research and innovation programme
(grant agreement No 674978). NY is supported by The Alan Turing Institute under the EPSRC grant EP/N510129/1. The authors would like to thank Luciana Basualdo Bonatto, Vidit Nanda and Agnese Barbensi for their useful feedback, and the Oxford Mathematical Institute for providing access to computational resources. The authors also want to thank the anonymous referees for their useful comments.

\newpage

\end{document}